\newtheorem{theorem}{Th\'eor\`eme}[section]
\newtheorem*{theo}{Th\'eor\`eme}
\newtheorem{lemma}[theorem]{Lemme}
\theoremstyle{definition}
\newtheorem{proposition}[theorem]{Proposition}
\theoremstyle{remark}
\newtheorem{remark}[theorem]{Remarque}
\numberwithin{equation}{section}
\begin{document}

\title[]{Divergence et parall\'elisme des rayons d'\'etirement cylindriques} 

\author{Guillaume Th\'eret}
\address{Max Planck Institut f\"ur Mathematik\\
 Vivatsgasse 7 \\ 
 53100 Bonn, Deutschland}
\curraddr{}
\email{theret@mpim-bonn.mpg.de}
\thanks{}

\subjclass[2000]{Primary 30F60, 57M50, 53C22.\\
\noindent Key words: Teichm\"uller space, hyperbolic
structure, geodesic lamination, stretch, Thurston's boundary, measured foliation.}

\date{}

\dedicatory{}

\begin{abstract}
A cylindrical stretch line is a stretch line, in the sense of Thurston, 
whose horocyclic lamination is a weighted multicurve.
In this paper, we show that two correctly parameterized cylindrical lines are parallel
if and only if these lines converge towards the same point in Thurston's boundary 
of Teichm\"uller space.
\end{abstract}

\maketitle

\section{Introduction}

Fixons une surface ferm\'ee orientable $\Sigma$ de genre fini sup\'erieur ou \'egal \`a deux.
Notons $\mathcal{T}(\Sigma)$ l'espace de Teichm\"uller associ\'e \`a la surface $\Sigma$.
Dans cet article, nous munissons $\mathcal{T}(\Sigma)$ de la m\'etrique asym\'etrique de Thurston $d_{\mathcal{T}}$.
Cette m\'etrique est donn\'ee par la formule
$$
d_{\mathcal{T}}(g,h)=\log\sup_{\alpha\in\mathcal{ML}(\Sigma)}\frac{\ell_{h}(\alpha)}{\ell_{g}(\alpha)},
$$
o\`u $\mathcal{ML}(\Sigma)$ d\'esigne l'espace des laminations g\'eod\'esiques mesur\'ees et 
$\ell_{h}(\alpha)$ d\'esigne la longueur de la lamination g\'eod\'esique mesur\'ee $\alpha$ pour 
la classe d'isotopie $h\in\mathcal{T}(\Sigma)$ de m\'etriques hyperboliques.
L'adjectif "asym\'etrique" insiste sur le fait qu'il existe des points $g,h$ de $\mathcal{T}(\Sigma)$ pour lesquels
$d_{\mathcal{T}}(g,h)\neq d_{\mathcal{T}}(h,g)$.

Une ligne d'\'etirement de Thurston est une g\'eod\'esique orient\'ee $t\mapsto h_{t}$ de l'espace de Teichm\"uller
pour la m\'etrique de Thurston, c'est-\`a-dire que l'on a, pour tout $s\leq t$, 
$$
d_{\mathcal{T}}(h_{s},h_{t})=t-s.
$$
Dans cet article, toutes les param\'etrisations seront positives (i.e., pr\'eserveront l'orientation) et par la longueur d'arc.

Une ligne d'\'etirement est d\'etermin\'ee par deux laminations g\'eod\'esiques,
l'une \'etant \emph{compl\`ete}, appel\'ee le \textbf{support} de la ligne d'\'etirement 
et g\'en\'eralement not\'ee $\mu$, et l'autre \'etant 
la classe projective d'une lamination mesur\'ee totalement transverse \`a $\mu$,
appel\'ee la \textbf{direction} de la ligne d'\'etirement. 
La \textbf{souche} d'une ligne d'\'etirement est la sous-lamination du support de la ligne
qui poss\`ede une mesure transverse de support total et qui est maximale au sens de l'inclusion. 

Une \textbf{multicourbe} est une r\'eunion de courbes simples ferm\'ees disjointes.
La donn\'ee d'une mesure transverse sur une multicourbe est \'equivalente \`a la donn\'ee 
d'une pond\'eration positive des composantes de cette multicourbe.

Dans cet article, nous imposerons aux supports des lignes d'\'etirement d'\^etre des laminations
\textbf{r\'ecurrentes par cha\^ines}, c'est-\`a-dire limites de multicourbes pour la topologie de Hausdorff.

On dira d'une ligne d'\'etirement qu'elle est \textbf{cylindrique} 
si sa direction est la classe projective d'une multicourbe pond\'er\'ee. 

Nous dirons de deux lignes d'\'etirement $t\mapsto g_{t}$, $t\mapsto h_{t}$,
qu'elles \textbf{divergent} si les distances $d_{\mathcal{T}}(g_{t},h_{t})$ et $d_{\mathcal{T}}(h_{t},g_{t})$
tendent vers l'infini lorsque $t$ tend vers $+\infty$.
Enfin, nous dirons de ces lignes qu'elles sont \textbf{parall\`eles} si, quitte \`a changer la param\'etrisation,
les distances pr\'ec\'edentes restent born\'ees pour $t$ assez grand (nous verrons un peu plus loin que 
la possibilit\'e de changement de param\'etrisation dans cette d\'efinition est importante, 
contrairement au cas des m\'etriques sym\'etriques).

Le but de ce papier est de d'\'etablir le r\'esultat suivant.

\begin{theo}
Deux lignes d'\'etirement cylindriques, de supports r\'ecurrents par cha\^ines, 
sont parall\`eles si et seulement si elles ont m\^eme direction.
\end{theo}

Nous d\'emontrerons en fait quelque chose d'un peu plus pr\'ecis, \`a savoir, que deux lignes d'\'etirement cylindriques
non parall\`eles divergent, quitte \`a les reparam\'etrer.\\

Notre r\'esultat est en contraste avec la situation o\`u l'espace de Teichm\"uller est muni de la m\'etrique de 
Teichm\"uller.
Dans ce cas, H. Masur \cite{masur} a montr\'e l'existence de lignes d'\'etirement de Teichm\"uller cylindriques (dites 
de Jenkins-Strebel dans ce contexte) dirig\'ees par des classes projectives distinctes mais n\'eanmoins parall\`eles.\\

Le fait qu'il faille \'eventuellement changer la param\'etrisation des lignes d'\'etirement vient du r\'esultat suivant,
qui sera d\'emontr\'e dans cet article.

\begin{theo}
Soit $t\mapsto h_{t}$ une ligne d'\'etirement cylindrique de support r\'ecurrent par cha\^ines.
Pour tout nombre strictement positif $c$, on a
$$
\forall t,\ d_{\mathcal{T}}(h_{t},h_{t+c})=c,\ \textrm{et}\ \lim_{t\to\infty}d_{\mathcal{T}}(h_{t+c},h_{t})=\infty.
$$
\end{theo}

En effet, ce r\'esultat montre que si l'on ne prend pas garde \`a reparam\'etrer la ligne, deux lignes d'\'etirement dont les images co\"incident
pourraient ne pas \^etre parall\`eles. 
Notez que deux lignes d'\'etirement divergentes ne sont pas parall\`eles.

\section{Pr\'eliminaires}

Nous rappelons ici tr\`es succintement la construction des lignes d'\'etirement de Thurston \cite{thu}.

Fixons une lamination g\'eod\'esique compl\`ete $\mu$ ainsi qu'une m\'etrique hyperbolique $h$ sur $\Sigma$.
Chaque composante de $\Sigma\setminus\mu$ est l'int\'erieur d'un triangle id\'eal.
On construit d'abord un feuilletage partiel de l'int\'erieur de chaque triangle id\'eal de $\Sigma\setminus\mu$.
Les feuilles de ce feuilletage partiel sont des arcs d'horocycles centr\'es aux sommets du triangle.
Le feuilletage est invariant par la sym\'etrie d'ordre 3 de chaque triangle.
Une r\'egion triangulaire bord\'ee par 3 arcs d'horocycles de longueur 1 reste non-feuillet\'ee.
On \'etend ensuite par continuit\'e ce feuilletage partiel d\'efini sur $\Sigma\setminus\mu$ en un feuilletage partiel sur $\Sigma$.
Il y a alors exactement une r\'egion non-feuillet\'ee par triangle id\'eal de $\mu$.
On munit ce feuilletage partiel d'une mesure transverse en d\'ecr\'etant que la mesure
d'un arc compact transverse est la longueur de sa projection le long des feuilles du feuilletage partiel 
sur une feuille de $\mu$.
On note ce feuilletage partiel mesur\'e $F_{\mu}(h)$ et on l'appelle le \textbf{feuilletage horocyclique} associ\'e \`a $\mu$ et $h$.
La classe, au sens des feuilletages mesur\'es, du feuilletage horocyclique est not\'ee de la m\^eme fa\c con.\\

W. Thurston a montr\'e que la construction pr\'ec\'edente,
$$
F_{\mu}\ :\ \mathcal{T}(\Sigma)\to\mathcal{MF}(\Sigma),\quad h\mapsto F_{\mu}(h),
$$
est un hom\'eomorphisme sur son image.
Cette image est constitu\'ee, dans le cas des surfaces ferm\'ees, des classes de feuilletages
mesur\'es transverses \`a $\mu$.

La \textbf{ligne d'\'etirement} de support $\mu$ et passant par $h\in\mathcal{T}(\Sigma)$ est la courbe 
$$
t\mapsto h_{t}=F_{\mu}^{-1}(e^{t}F_{\mu}(h)),
$$
o\`u $t$ varie dans $\mathbb{R}$ et o\`u la notation $e^{t}F_{\mu}(h)$ signifie que l'on a multipli\'e la mesure transverse
du feuilletage horocyclique $F_{\mu}(h)$ par $e^{t}$.
Notez que, par d\'efinition,
$$
h_{0}=h\ \textrm{et}\ F_{\mu}(h_{t})=e^{t}F_{\mu}(h).
$$
L'image, par l'application $F_{\mu}$, de la ligne d'\'etirement passant par $h$ et de support $\mu$ 
est l'ensemble $\{e^{t}F_{\mu}(h)\ :\ t\in\mathbb{R}\}$.
Tous les points d'une ligne d\'etirement d\'efinissent des feuilletages horocycliques qui appartiennent \`a la m\^eme classe projective,
qu'on appelle la \textbf{direction} de la ligne d'\'etirement.
Ainsi, une ligne d'\'etirement, comme sous-ensemble de $\mathcal{T}(\Sigma)$, est d\'etermin\'ee par son support et sa direction.

Dans la suite, nous parlerons \'egalement de la \textbf{lamination horocyclique} $\lambda_{\mu}(h)$ qui n'est rien d'autre que 
la lamination g\'eod\'esique mesur\'ee associ\'ee au feuilletage horocyclique $F_{\mu}(h)$.

\section{\'Etirements \'el\'ementaires cylindriques et longueur de la lamination horocyclique}

Le but de cette partie est d'\'etablir les r\'esultats pr\'esent\'es plus haut dans le cas particulier o\`u les 
supports des lignes d'\'etirements sont des laminations g\'eod\'esiques \'el\'ementaires.
Une lamination g\'eod\'esique est \textbf{\'el\'ementaire} si elle est compl\`ete, 
r\'ecurrente par cha\^ines et si sa souche est une multicourbe.
De mani\`ere \'equivalente, une lamination \'el\'ementaire est une lamination 
g\'eod\'esique compl\`ete compos\'ee d'un nombre fini de feuilles ferm\'ees 
et d'un nombre fini de feuilles infinies, isol\'ees, spiralant autour de chaque feuille ferm\'ee 
de telle mani\`ere que, pour un observateur situ\'e sur la feuille ferm\'ee $a$ et regardant les spirales, 
celles-ci tournent dans la m\^eme direction pour les deux c\^ot\'es de $a$.\\

Soit $\mu$ une lamination g\'eod\'esique compl\`ete \'el\'ementaire de souche $\gamma$.
Supposons que la structure hyperbolique $h\in\mathcal{T}(\Sigma)$ soit choisie de telle sorte que la 
lamination horocyclique $\lambda_{\mu}(h)$ -- que nous noterons desormais $\lambda$ pour faire court --
soit une multicourbe, ou, pour reprendre le langage de l'introduction, de telle sorte que la ligne d'\'etirement
de support $\mu$ et de direction la classe de $\lambda$ soit cylindrique.

Chaque composante $\lambda_{j}$, $j\in\{1,\cdots,M\}$, de $\lambda$ d\'efinit un cylindre $C_{j}$ qui est
la cl\^oture de la r\'eunion des feuilles du feuilletage horocyclique $F_{\mu}(h)$ qui sont isotopes \`a $\lambda_{j}$.
La g\'eod\'esique ferm\'ee simple $\lambda_{j}$ est appel\'ee le \textbf{c\oe ur} du cylindre $C_{j}$.
Chacun des deux bords du cylindre est une feuille ferm\'ee singuli\`ere, ce qui signifie
qu'elle contient un nombre fini de c\^ot\'es de r\'egions non-feuillet\'ees du feuilletage horocyclique.
Nous dirons que ces r\'egions non-feuillet\'ees sont \textbf{adjacentes} au cylindre. 
Par d\'efinition de la mesure transverse de $F_{\mu}(h)$, 
les feuilles de $\mu\cap C_{j}$ ont toutes m\^eme longueur, $w_{j}$.
Le nombre $w_{j}$, qu'on appellera la \textbf{largeur} du cylindre $C_{j}$, est aussi le poids
de la composante $\lambda_{j}$ vue comme lamination g\'eod\'esique mesur\'ee.

Nous allons donner un \'equivalent asymptotique de la longueur de la g\'eod\'esique $\lambda_{j}$ lorsqu'on \'etire 
le long de $\mu$ ind\'efiniment.

\begin{proposition}
\label{pr:equiv_elem}
Soit $t\mapsto h_{t}$ une ligne d'\'etirement \'el\'ementaire cylindrique de support $\mu$, passant par $h=h_{0}$.
La lamination horocyclique $\lambda_{\mu}(h)$ est la multicourbe $\lambda=\lambda_{1}\cup\cdots\cup\lambda_{M}$ 
avec les pond\'erations $w_{1},\dots,w_{M}$.
Lorsque $t$ tend vers $+\infty$, 
la longueur $\ell_{h_{t}}(\lambda_{j})$ de la courbe simple ferm\'ee $\lambda_{j}$ est \'equivalente \`a la quantit\'e
$$
2\sqrt{K_{j}}e^{-e^{t}w_{j}/2},
$$
o\`u $K_{j}\in\mathbb{N}$ est le nombre de r\'egions non-feuillet\'ees adjacentes \`a un bord du cylindre de c\oe ur $\lambda_{j}$
multipli\'e par le nombre de r\'egions non-feuillet\'ees adjacentes \`a l'autre bord. 
\end{proposition}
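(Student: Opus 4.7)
\smallskip
\noindent\textit{Plan de la preuve.} Le plan est d'\'etablir la relation asymptotique
\[
e^t w_j = \log\frac{4K_j}{\ell^2} + o(1)\qquad (t\to +\infty,\ \ell:=\ell_{h_t}(\lambda_j)\to 0),
\]
qui, par inversion, donne imm\'ediatement l'\'equivalent $\ell \sim 2\sqrt{K_j}\,e^{-e^t w_j/2}$ annonc\'e.

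Je me place dans le rev\^etement universel, en relevant $\lambda_j$ sur l'axe imaginaire de $\mathbb{H}^2$, la transformation de rev\^etement correspondante \'etant $\phi:z\mapsto e^\ell z$. Les feuilles de $\mu$ spiralant dans $\lambda_j$ se rel\`event en deux familles de rayons verticaux: $p$ par domaine fondamental \`a droite et $q$ \`a gauche, avec $K_j=pq$. Dans chacun des triangles id\'eaux compris entre deux spires cons\'ecutives, la r\'egion non-feuillet\'ee poss\`ede un c\^ot\'e centr\'e \`a l'infini (arc horocyclique horizontal de longueur hyperbolique~$1$) qui contribue au bord correspondant de $C_j$. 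Le bord est ainsi une courbe $\phi$-invariante form\'ee de $p$ (resp.~$q$) arcs horocycliques par p\'eriode, raccord\'es aux pointes, c'est-\`a-dire aux coins des r\'egions non-feuillet\'ees adjacentes.

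D'apr\`es la d\'efinition de $w_j$, la largeur $e^t w_j$ est la longueur hyperbolique dans $h_t$ d'un arc d'une feuille de $\mu$ traversant $C_j$ d'un bord \`a l'autre. En param\'etrant cet arc en coordonn\'ees de Fermi $(s,r)$ autour de $\lambda_j$, et en utilisant la formule $d_{\mathbb{H}^2}((0,0),(s,r))=\mathrm{arccosh}(\cosh s\,\cosh r)$, on peut calculer sa longueur en fonction des coordonn\'ees des pointes du bord; celles-ci se d\'eduisent de la g\'eom\'etrie explicite des triangles id\'eaux ambiants et de la position des spires. Un calcul asymptotique lorsque $\ell\to 0$ fournit alors $e^t w_j = \log(4K_j/\ell^2) + o(1)$.

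Le point techniquement le plus d\'elicat est ce dernier calcul asymptotique, qui n\'ecessite un suivi pr\'ecis du point o\`u la feuille de $\mu$ rencontre le bord et un maniement attentif des formules de trigonom\'etrie hyperbolique, en particulier pour contr\^oler les termes en $o(1)$. La constante multiplicative $2\sqrt{K_j}$ de l'\'equivalent \'emerge pr\'ecis\'ement du facteur combinatoire $K_j=pq$ \'egal au produit des nombres de pointes sur les deux bords du cylindre.
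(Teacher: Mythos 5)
Your proposal is a plan rather than a proof: the entire quantitative content of the proposition is concentrated in the step you defer (``un calcul asymptotique lorsque $\ell\to 0$ fournit alors $e^{t}w_{j}=\log(4K_{j}/\ell^{2})+o(1)$''), and nothing in what precedes actually establishes that relation. The reformulation itself is fine --- it is logically equivalent to the stated equivalence --- but asserting it is assuming the conclusion. The paper proves the estimate by a concrete two-sided bound: it decomposes the cylinder of core $\lambda_{j}$ into finitely many maximal bands of thicknesses $a_{1},\dots,a_{2N}$, bounds the core length $\ell(t)$ from above by the length $h^{*}(t)=2\sqrt{a_{p}a_{i}}\,e^{-w(t)/2}$ of the shortest closed leaf of the horocyclic foliation in the cylinder (with $a_{p}$, $a_{i}$ the sums of even- and odd-indexed thicknesses), bounds it from below by the perpendicular height $h(t)$ of the cut-open cylinder, computes $h(t)$ through an explicit continued-fraction expression for the abscissae $x_{2N-1}$, $x_{2N}$ in the upper half-plane, and checks that both bounds are asymptotic to $2\sqrt{a_{p}a_{i}}\,e^{-w(t)/2}$ with $a_{p}a_{i}\to K_{j}$. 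Some argument of comparable substance is indispensable.

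Two points in your setup are moreover incorrect, and they hide exactly the mechanism that produces $K_{j}$. First, the leaves of $\mu$ do not ``spiral into $\lambda_{j}$'' and do not lift to vertical rays asymptotic to the axis of $\phi$: $\lambda_{j}$ is a component of the horocyclic multicurve, hence transverse to $\mu$, and the leaves of $\mu$ cross the cylinder in arcs of length $w_{j}(t)$ joining its two boundary leaves (the spiraling of $\mu$ is around the stump $\gamma$, not around $\lambda_{j}$). Second, at finite $t$ each boundary leaf of the cylinder is a concatenation of many horocyclic arcs, of which only $p$ (resp.\ $q$) are sides of non-foliated regions and have length exactly $1$; the others have length $<1$. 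Since stretching raises each such length to the power $e^{t}$, the sub-unit arcs tend to $0$ while the unit arcs stay equal to $1$, and this is precisely why the relevant product of thicknesses converges to $K_{j}=pq$. Without tracking these extra arcs and showing their contribution becomes negligible, the constant $2\sqrt{K_{j}}$ in the equivalent cannot be justified.
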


\subsection{Calculs dans un cylindre}

Soit $C$ un cylindre parmi les cylindres $C_{1},\cdots,C_{M}$.
Notons $w$ sa largeur.
La multicourbe $\gamma$ traverse le cylindre $C$ en un nombre fini de segments.
Au voisinage de chacun de ces segments, les feuilles isol\'ees de $\mu$ spiralent 
dans un sens qui est le m\^eme de chaque c\^ot\'e du segment. 

Une \textbf{bande} de $C$ d\'esignera une partie obtenue en
coupant le cylindre $C$ le long de deux feuilles de $\mu\cap C$, 
de telle sorte que toutes les feuilles isol\'ees de $\mu$ \`a l'int\'erieur de cette bande 
ont le m\^eme sommet id\'eal.

Ainsi, une bande est isom\'etrique \`a un rectangle du demi-plan hyperbolique, 
dont les bords horizontaux sont des horocycles centr\'es en l'infini et les bords verticaux des g\'eod\'esiques asymptotes
\`a l'infini.
Dans cette description, les feuilles de $\mu$ traversant $C$ correspondent \`a des g\'eod\'esiques verticales asymptotes \`a l'infini.
La restriction du feuilletage horocyclique correspond quant \`a lui au feuilletage du rectangle par des segments euclidiens horizontaux,
c'est-\`a-dire par des arcs d'horocycles centr\'es \`a l'infini.
Les longueurs de ces arcs d'horocycles sont strictement d\'ecroissantes \`a mesure que l'on se rapproche lin\'eairement du sommet id\'eal. 
L'\textbf{\'epaisseur} d'une bande est la longueur du plus grand arc d'horocycle.

Une bande est dite \textbf{minimale} s'il n'existe pas de bande strictement contenue en elle.
Une bande minimale est donc la partie d'un triangle id\'eal de $\mu$ comprise entre deux feuilles du feuilletage horocyclique.

Une bande est dite \textbf{maximale} s'il n'existe pas de bande la contenant strictement.  
Il existe une unique d\'ecomposition du cylindre $C$ en bandes maximales d'int\'erieurs disjoints.
On l'appelle la \textbf{d\'ecomposition} de $C$ \textbf{en bandes}.
Les arcs d'horocycles de deux bandes adjacentes ont des convexit\'es de signes oppos\'es (voir la figure \ref{bande1}).
Notez que la d\'ecomposition en bandes contient un nombre pair de bandes.

\begin{figure}[!hbp]
\centering
\psfrag{l}{\small $a$}
\psfrag{r}{\small $b$}
\includegraphics[width=.5\linewidth]{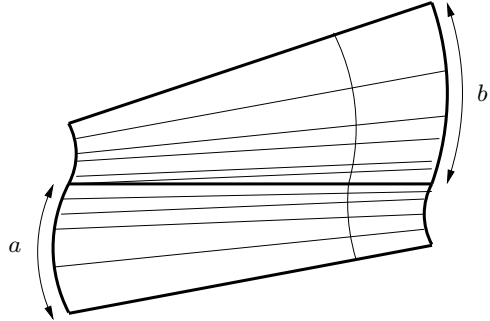}
\caption{\small{Ce dessin repr\'esente deux bandes adjacentes de la d\'ecomposition en bandes d'un cylindre.
La fronti\`ere commune dans ce dessin est une composante de $\gamma\cap C$ et est repr\'esent\'ee horizontalement, en trait \'epais. 
Une feuille int\'erieure du feuilletage horocyclique a \'et\'e dessin\'ee, en trait fin ; elle est obtenue en concat\'enant deux
arcs d'horocycles de convexit\'es de signes oppos\'es. Les \'epaisseurs des bandes sont not\'ees $a$ et $b$.}}
\label{bande1}
\end{figure}

Notons $B_{1},\cdots,B_{2N}$ la d\'ecomposition de $C$ en bandes, avec $B_{j}$ adjacente \`a $B_{j+1}$.
Notons  $a_{1},\cdots,a_{2N}$ les \'epaisseurs respectives de ces bandes.
Orientons les feuilles de $\mu\cap C$ de telle sorte que les arcs d'horocycles repr\'esentant les \'epaisseurs 
d'indices impairs se trouvent dans le bord gauche de $C$ et les arcs d'horocycles repr\'esentant les \'epaisseurs 
d'indices pairs se trouvent dans le bord droit de $C$.
Posons
$$
a_{p}=\sum_{k=1}^{N}a_{2k}\ \textrm{et}\ a_{i}=\sum_{k=1}^{N}a_{2k-1}.
$$
On param\`etre l'ensemble des feuilles du feuilletage horocyclique de $C$ par la distance, par rapport au bord droit de $C$, 
\`a laquelle elles intersectent n'importe quelle feuille de $\mu\cap C$.
On note $h^{*}(d)$ la longueur de la feuille ferm\'ee du feuilletage horocyclique situ\'ee \`a la distance $d\in[0,w]$.
La longueur $h^{*}(d)$ est la somme des longueurs des arcs d'horocycles contenus dans les bandes.
L'ordre de ces horocycles est sans incidence sur cette longueur et l'on peut donc supposer que $h^{*}(d)$ est la longueur
d'une concatenation de deux arcs d'horocycles de convexit\'es invers\'ees contenus dans deux bandes 
d'\'epaisseurs $a_{p}=\sum_{k=1}^{N}a_{2k}$ et $a_{i}=\sum_{k=1}^{N}a_{2k-1}$.
On a
$$
h^{*}(d)=a_{p}\,e^{-d}+a_{i}\,e^{-w+d}.
$$
On recherche le minimum $h^{*}$ de cette function lorsque $d$ varie dans $[0,w]$.
En regardant o\`u la d\'eriv\'ee s'annule, on obtient que le minimum est unique, 
atteint pour $e^{d}=\sqrt{\frac{a_{p}}{a_{i}}}e^{w/2}$ et vaut

\begin{center}
\framebox[1,2\width][c]{
$h^{*}=2\sqrt{a_{p}\,a_{i}}\,e^{-w/2}$.}
\end{center}
$\ $\\

Notons $h$ la longueur de l'arc g\'eod\'esique joignant le segment de $\mu\cap C$
s\'eparant $B_{1}$ et $B_{2N}$ \`a lui-m\^eme perpendiculairement.
On appelle cette longueur une \textbf{hauteur} du cylindre $C$.\\

On se r\'ef\`ere \`a la figure \ref{bande2}.
Coupons le cylindre $C$ le long de la feuille s\'eparant $B_{1}$ et $B_{2N}$. 
On place le cylindre coup\'e dans le demi-plan sup\'erieur hyperbolique comme indiqu\'e, de telle sorte
qu'un des c\^ot\'es g\'eod\'esique soit contenu dans la verticale issue de $O$ \`a partir du point
d'ordonn\'ee 1.
Le nombre $h$ est la longueur de l'arc de cercle joignant perpendiculairement la verticale issue de $O$ 
et le demi-cercle passant par les points situ\'es sur l'axe des abscisses aux valeurs $x_{2N-1}$ et $x_{2N}$.
Le bord gauche de $C$, situ\'e en bas, est la concat\'enation des arcs d'horocycles de longueurs 
$a_{1},a_{2}e^{-w},a_{3},a_{4}e^{-w},\cdots,a_{2N-1},a_{2N}e^{-w}$.
Le bord droit de $C$, situ\'e en haut, est la concat\'enation des arcs d'horocycles de longueurs 
$a_{1}e^{-w},a_{2},a_{3}e^{-w},a_{4}\cdots,a_{2N-1}e^{-w},a_{2N}$.
La bande $B_{j}$, $j\geq 1$, est bord\'ee par les g\'eod\'esiques joignant l'abscisses $x_{j-1}$ aux abscisses $x_{j-2}$ et $x_{j}$
(avec $x_{0}=\infty$ et $x_{-1}=0$).
Remarquons tout d'abord que l'on a 
$$
x_{2j}=a_{1}+\cfrac{1}{a_{2}e^{-w}+
  \cfrac{1}{a_{3}+
    \cfrac{1}{\ddots
      \cfrac{1}{a_{2j-1}+
	\cfrac{1}{a_{2j}e^{-w}
}}}}}
$$
et

$$
x_{2j-1}=a_{1}+
\cfrac{1}{a_{2}e^{-w}+
  \cfrac{1}{a_{3}+
    \cfrac{1}{\ddots
      \cfrac{1}{a_{2j-2}e^{-w}+
	\cfrac{1}{a_{2j-1}
}}}}},
$$

ce que l'on note
$$
x_{2j}=[a_{1},a_{2}e^{-w},\cdots,a_{2j}e^{-w}]\ \textrm{et}\ x_{2j-1}=[a_{1},a_{2}e^{-w},\cdots,a_{2j-1}].
$$
En effet, l'isom\'etrie qui envoie la g\'eod\'esique verticale issue de $0$ sur la g\'eod\'esique joignant les
abscisses $x_{j-1}$ et $x_{j}$ est donn\'ee par la composition $P_{1}\circ P_{2}\circ\cdots\circ P_{j}(0)$
des isom\'etries de type parabolique
$$
P_{k}\ :\ z\mapsto
\left\{
  \begin{array}{ll}
    z+a_{k} & \textrm{si}\ k\ \textrm{est impair},\\
    \frac{1}{\bar{a}_{k}+\frac{1}{z}},& \textrm{si}\ k\ \textrm{est pair},\ \textrm{o\`u}\ \bar{a}_{k}=a_{k}e^{-w}.
  \end{array}
\right.
$$
Notez que l'ordre de composition est invers\'e.
Le point $x_{j}$ est l'image de z\'ero ou de l'infini par ce produit d'isom\'etries, suivant la parit\'e de $j$.
Plus pr\'ecis\'ement, pour $j\in\{1,\cdots,N\}$,
$$
x_{2j-1}=P_{1}\circ P_{2}\circ\cdots P_{2j-1}(0)\ \textrm{et}\ x_{2j}=P_{1}\circ P_{2}\circ\cdots P_{2j}(\infty).
$$
$\ $\\

Soit $\theta$ l'angle \`a l'origine entre la verticale issue de l'origine et la courbe \'equidistante 
tangente \`a la g\'eod\'esique joignant les abscisses $x_{2N-1}$ et $x_{2N}$.
Un peu de g\'eom\'etrie hyperbolique permet d'\'etablir
$$
\cosh(h)=\frac{1}{\cos(\theta)}.
$$
En effet, si l'on param\'etrise l'arc de cercle $x^{2}+y^{2}=1$ \`a l'aide de l'abscisse $x$, 
on obtient $h=\int_{0}^{\sin\theta}\frac{\sqrt{dy^{2}(x)+dx^{2}}}{y(x)}=\int_{0}^{\sin\theta}\frac{\sqrt{x^{2}/y(x)^{2}+1}}{y(x)}dx
=\int_{0}^{\sin\theta}\frac{dx}{1-x^{2}}$. 
Donc $\tanh(h)=\sin(\theta)$.
On en d\'eduit la formule.

\begin{figure}[!hbp]
\centering
\psfrag{i}{\small $i$}
\psfrag{0}{\small $0$}
\psfrag{h}{\small $h$}
\psfrag{t}{\small $\theta$}
\psfrag{x1}{\small $x_{1}$}
\psfrag{x2}{\small $x_{2}$}
\psfrag{x3}{\small $x_{3}$}
\psfrag{x4}{\small $x_{4}$}
\psfrag{x5}{\small $x_{5}$}
\psfrag{x8}{\small $x_{6}$}
\psfrag{a1}{\small $a_{1}$}
\psfrag{a2}{\small $a_{2}$}
\psfrag{a3}{\small $a_{3}$}
\psfrag{a5}{\small $a_{5}$}
\psfrag{a1e}{\small $a_{1}e^{-w}$}
\psfrag{a2e}{\small $a_{2}e^{-w}$}
\includegraphics[width=.9\linewidth]{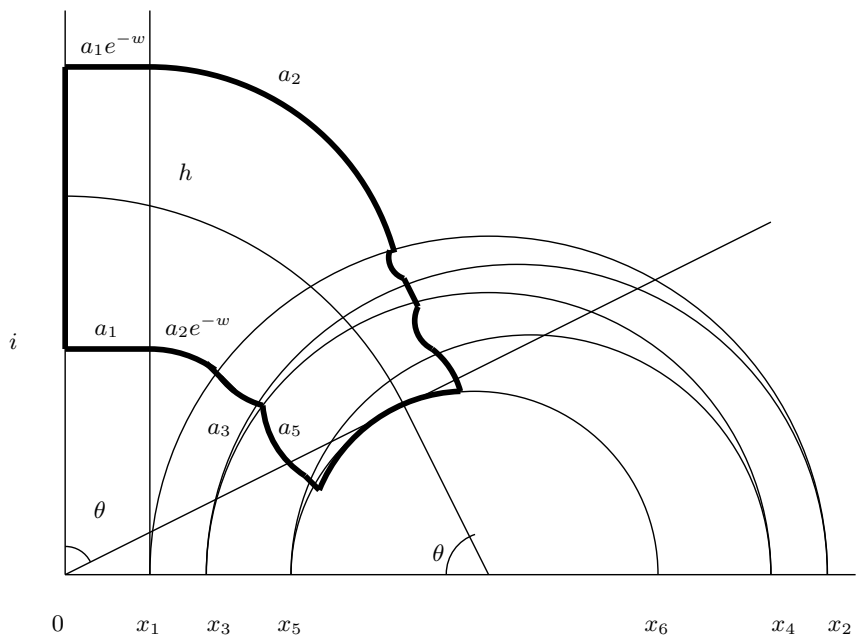}
\caption{\small{Ce dessin repr\'esente le cylindre $C$ (coup\'e le long d'une feuille de $\mu$) et sa 
d\'ecomposition en bandes $B_{1},\cdots,B_{2N}$, dans le demi-plan sup\'erieur hyperbolique.
Nous avons indiqu\'e les \'epaisseurs des bandes ainsi que les abscisses $x_{1},x_{2},\ldots$}}
\label{bande2}
\end{figure}

Revenons maintenant \`a la figure de d\'epart.
On a 
$$
\cos(\theta)=\frac{x_{2N}-x_{2N-1}}{x_{2N}+x_{2N-1}}.
$$
Finalement, on obtient

\begin{center}
\framebox[1,2\width][c]{$\cosh(h)=1+\frac{2}{\frac{x_{2N}}{x_{2N-1}}-1}$.}
\end{center}

\subsection{Estimation asymptotique le long d'une ligne \'etirement}

D\'eformons maintenant la structure hyperbolique $h$ en l'\'etirant le long de $\mu$.
Nous allons donner une estimation asymptotique de la longueur des composantes $\lambda_{j}$.
On reprend les notations du paragraphe pr\'ec\'edent mais on ajoute le param\`etre $t$.
Notons $\ell(t)$ la longueur du c\oe ur g\'eod\'esique du cylindre $C$. 
On a l'encadrement
$$
h(t)\leq\ell(t)\leq h^{*}(t).
$$

La longueur de tout arc d'horocycle contenu dans la pointe d'un triangle id\'eal est inf\'erieure ou \'egale \`a un.
Elle vaut un si et seulement si l'arc d'horocycle est un c\^ot\'e d'une r\'egion non-feuillet\'ee du feuilletage horocyclique.
Lorsqu'on effectue un \'etirement de temps $t\geq 0$, cette longueur est \'elev\'ee \`a la puissance $e^{t}$.
Ainsi, la longueur d'un tel arc d'horocycle converge vers z\'ero lorsque $t$ tend vers l'infini
si et seulement si elle est strictement plus petite que 1.
Sinon, elle reste constante \'egale \`a 1.
On en conclut que l'\'epaisseur d'une bande est une fonction de $t$ d\'ecroissante et qu'elle a pour limite,
lorsque $t$ tend vers l'infini, le nombre de r\'egions non-feuillet\'ees adjacentes \`a la bande. 

Nous allons maintenant donner un \'equivalent de $h(t)$ lorsque $t$ tend vers l'infini.
Notons d'abord que, par d\'efinition de la mesure transverse du feuilletage horocyclique, on a $w(t)=e^{t}w$.

\begin{lemma}
On a, lorsque $t$ tend vers l'infini, les \'equivalences suivantes
$$
x_{2N-1}(t)\sim a_{i}\ \textrm{et}\  x_{2N}(t)\sim\frac{1}{a_{p}}e^{e^{t}w}.
$$
\end{lemma}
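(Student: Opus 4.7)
The plan is to unwind both continued fractions from the innermost level to the outermost, using the isometry expressions $x_{2N-1}(t)=P_{1}\circ P_{2}\circ\cdots\circ P_{2N-1}(0)$ and $x_{2N}(t)=P_{1}\circ P_{2}\circ\cdots\circ P_{2N}(\infty)$ established above. The two facts I will use repeatedly are that each $a_{k}(t)$ converges, as $t\to\infty$, to a strictly positive limit $a_{k}^{\infty}$ (the number of non-foliated regions adjacent to the corresponding band, which is positive in the non-degenerate setting of the proposition), and that $\bar{a}_{2k}(t)=a_{2k}(t)\,e^{-e^{t}w}$ vanishes at the doubly exponential rate $e^{-e^{t}w}$.

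For the first equivalence, set $u_{k}(t)=P_{k}\circ\cdots\circ P_{2N-1}(0)$, so that $u_{1}(t)=x_{2N-1}(t)$. The base case $u_{2N-1}(t)=a_{2N-1}(t)\to a_{2N-1}^{\infty}$ is immediate. For the descending inductive step: $P_{2k-1}$ is a translation by the bounded quantity $a_{2k-1}(t)$, so it preserves convergence with additive effect $a_{2k-1}(t)$; and $P_{2k}(z)=z/(1+\bar{a}_{2k}(t)\,z)$ converges to the identity uniformly on compact subsets of $(0,\infty)$, since $\bar{a}_{2k}(t)\to 0$. Chaining these, one obtains $u_{1}(t)\to a_{1}^{\infty}+a_{3}^{\infty}+\cdots+a_{2N-1}^{\infty}=\lim_{t\to\infty}a_{i}(t)$, which yields the first equivalence.

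For the second equivalence, set $v_{k}(t)=P_{k}\circ\cdots\circ P_{2N}(\infty)$, so $v_{1}(t)=x_{2N}(t)$. The seed is $v_{2N}(t)=1/\bar{a}_{2N}(t)\sim e^{e^{t}w}/a_{2N}^{\infty}$. I will prove by descending induction on $k\in\{1,\ldots,N\}$ the asymptotic
$$
v_{2k}(t)\sim\frac{e^{e^{t}w}}{a_{2k}^{\infty}+a_{2k+2}^{\infty}+\cdots+a_{2N}^{\infty}}.
$$
The inductive step splits into two moves: first, $v_{2k-1}(t)=v_{2k}(t)+a_{2k-1}(t)\sim v_{2k}(t)$, since a bounded translation is negligible against a diverging quantity; second, $v_{2k-2}(t)=1/\bigl(\bar{a}_{2k-2}(t)+1/v_{2k-1}(t)\bigr)$, where both summands in the denominator are of the form $c\cdot e^{-e^{t}w}(1+o(1))$, with the two constants $c$ adding up in the limit to give the inductive conclusion for $v_{2k-2}$. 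Finally, $x_{2N}(t)=v_{1}(t)=a_{1}(t)+v_{2}(t)\sim v_{2}(t)\sim e^{e^{t}w}/a_{p}^{\infty}$, giving the second equivalence.

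The main obstacle is the passage from $v_{2k-1}$ to $v_{2k-2}$, where two infinitesimals of the same order $e^{-e^{t}w}$ are added inside the denominator: loose concatenation of $\sim$-estimates could produce an additive error of the same order and spoil the asymptotic. The clean way around this is to rephrase the inductive hypothesis as $v_{2k}(t)=e^{e^{t}w}/C_{k}(t)$ with $C_{k}(t)\to\sum_{j=k}^{N}a_{2j}^{\infty}>0$, and to verify directly from the definition of $P_{2k-2}$ the scalar recursion $C_{k-1}(t)=a_{2k-2}(t)+C_{k}(t)(1+o(1))$, which closes the induction and specialises at $k=1$ to $C_{1}(t)\to a_{p}^{\infty}=\lim_{t\to\infty} a_{p}(t)$.
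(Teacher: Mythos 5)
Your proposal is correct and follows essentially the same route as the paper: the paper also collapses the continued fractions $x_{2N-1}=[a_{1},a_{2}e^{-w},\dots,a_{2N-1}]$ and $x_{2N}=[a_{1},a_{2}e^{-w},\dots,a_{2N}e^{-w}]$ by an induction that merges adjacent terms, using that the even-indexed entries decay like $e^{-e^{t}w}$ while the odd-indexed ones converge, arriving at $x_{2N-1}\sim a_{i}$ and $x_{2N}\sim a_{p}^{-1}e^{w(t)}$. Your version merely makes explicit, via the normalisation $v_{2k}=e^{w(t)}/C_{k}(t)$, the error control that the paper dismisses as ``une r\'ecurrence \'evidente''; note only that your hypothesis that \emph{each} $a_{k}^{\infty}$ is strictly positive is stronger than needed (and than what the paper asserts) --- your recursion only requires the total sums $a_{p}$ and $a_{i}$ to be positive in the limit.
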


\begin{proof}[D\'emonstration]
Notez que l'on a 
$$
x_{2j}(t)\sim [a_{1}(t),a_{2}(t)e^{-w(t)},\cdots,(a_{2j-2}(t)+a_{2j}(t))e^{-w(t)}]
$$
et 
$$
x_{2j-1}(t)\sim [a_{1}(t),a_{2}(t)e^{-w(t)},\cdots,a_{2j-3}(t)+a_{2j-1}(t)].
$$

On en d\'eduit, par une r\'ecurrence \'evidente, 
$$
x_{2j}(t)\sim [a_{1}(t),(a_{2}(t)+\cdots+a_{2j-2}(t)+a_{2j}(t))e^{-w(t)})]
$$
et
$$
x_{2j-1}(t)\sim a_{1}(t)+a_{3}(t)+a_{2j-3}(t)+a_{2j-1}(t).
$$
Par cons\'equent, on a 
$$
x_{2j}(t)\sim \Big{(}a_{2}(t)+\cdots+a_{2j-2}(t)+a_{2j}(t)\Big{)}^{-1}e^{w(t)}
$$
et
$$
x_{2j-1}(t)\sim a_{1}(t)+a_{3}(t)+a_{2j-3}(t)+a_{2j-1}(t).
$$
Notez que les sommes ci-dessus sont, pour $j=N$, non nulles et convergent 
vers les nombres entiers que l'on a not\'es $a_{p}$ et $a_{i}$ plus haut.
La d\'emonstration est achev\'ee.
\end{proof}

Du lemme pr\'ec\'edent et de la formule exprimant $\cosh(h)$, il vient

\begin{center}
\framebox[1,2\width][c]{$h(t)\sim 2\sqrt{a_{p}a_{i}}e^{-e^{t}w/2}$.}
\end{center}

De plus, on a facilement

\begin{center}
\framebox[1,2\width][c]{$h^{*}(t)\sim 2\sqrt{a_{p}a_{i}}e^{-e^{t}w/2}$.}
\end{center}

L'encadrement $h(t)\leq\ell(t)\leq h^{*}(t)$ permet de conclure 
\begin{center}
\framebox[1,2\width][c]{$\ell(t)\sim 2\sqrt{a_{p}a_{i}}e^{-e^{t}w/2}$.}
\end{center}

La proposition \ref{pr:equiv_elem} est d\'emontr\'ee.

\section{Divergence des lignes d'\'etirement \'el\'ementaires cylindriques}

Soient $t\mapsto g_{t}$ et $t\mapsto h_{t}$ deux lignes \'el\'ementaires de supports respectifs $\mu$ et $\nu$.
Supposons que leurs feuilletages horocycliques $F_{\mu}(g)$ et $F_{\nu}(h)$ correspondent \`a la m\^eme multicourbe 
$\lambda=\lambda_{1}\cup\cdots\cup\lambda_{M}$ avec des pond\'erations respectives $w_{j}(g)$, $w_{j}(h)$, 
$j\in\{1,\cdots,M\}$. 

Posons $\delta_{j}(g,h)=(w_{j}(h)-w_{j}(g))/2$.
D'apr\`es ce qui pr\'ec\`ede, on peut \'ecrire 
$$
\frac{\ell_{h_{t}}(\lambda_{j})}{\ell_{g_{t}}(\lambda_{j})}\sim K\,e^{-e^{t}\delta_{j}(g,h)},
$$
o\`u $K$ est une constante strictement positive qui ne d\'epend que des laminations $\mu$ et $\nu$. 

Par cons\'equent, 
comme le rapport $\frac{\ell_{h_{t}}(\lambda_{j})}{\ell_{g_{t}}(\lambda_{j})}$ minore l'exponentielle de la distance $d_{\mathcal{T}}(g_{t},h_{t})$,
on a
\begin{center}
\framebox[1,2\width][c]{
$\delta_{j}(g,h)<0\Longrightarrow \lim_{t\to\infty}d_{\mathcal{T}}(g_{t},h_{t})=\infty$.}
\end{center}

En particulier, on obtient le r\'esultat suivant

\begin{proposition}
La distance de Thurston n'est pas sym\'etrique le long d'une ligne d'\'etirement \'el\'ementaire cylindrique.
Plus pr\'ecis\'ement, si $t\mapsto h_{t}$ est un ligne d'\'etirement \'el\'ementaire cylindrique, on a,
pour tout $c>0$,
$$
\forall t,\ d_{\mathcal{T}}(h_{t},h_{t+c})=c\ \textrm{et}\ \lim_{t\to\infty}d_{\mathcal{T}}(h_{t+c},h_{t})=\infty.
$$
\end{proposition}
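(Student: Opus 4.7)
Le plan est de r\'eduire directement les deux assertions \`a des r\'esultats d\'ej\`a \'etablis dans ce qui pr\'ec\`ede, en comparant la ligne $t\mapsto h_{t}$ avec sa translat\'ee $t\mapsto h_{t+c}$, que l'on verra comme une \emph{seconde} ligne d'\'etirement cylindrique \'el\'ementaire, de m\^eme support et de m\^eme direction, mais issue d'un autre point de $\mathcal{T}(\Sigma)$.

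La premi\`ere \'egalit\'e ne demande aucun travail : par d\'efinition rappel\'ee dans l'introduction, une ligne d'\'etirement est une g\'eod\'esique de Thurston param\'etr\'ee par la longueur d'arc, donc $d_{\mathcal{T}}(h_{t},h_{t+c})=(t+c)-t=c$. Pour la limite, je poserais $g_{t}:=h_{t+c}$; la relation $F_{\mu}(h_{t})=e^{t}F_{\mu}(h_{0})$ montre que $t\mapsto g_{t}$ est bien la ligne d'\'etirement \'el\'ementaire cylindrique de support $\mu$ passant par $g_{0}=h_{c}$, de m\^eme multicourbe horocyclique $\lambda=\lambda_{1}\cup\cdots\cup\lambda_{M}$, mais avec pond\'erations $w_{j}(g)=e^{c}w_{j}(h)$. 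On aurait alors
$$
\delta_{j}(g,h)=\frac{w_{j}(h)-w_{j}(g)}{2}=\frac{w_{j}(h)(1-e^{c})}{2}<0
$$
pour tout $j\in\{1,\dots,M\}$, puisque $c>0$ et $w_{j}(h)>0$.

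Il ne resterait alors qu'\`a invoquer l'implication encadr\'ee \'etablie au d\'ebut de la section 4 : d\`es que $\delta_{j}(g,h)<0$ pour un indice $j$, le rapport $\ell_{h_{t}}(\lambda_{j})/\ell_{g_{t}}(\lambda_{j})$, qui minore l'exponentielle $e^{d_{\mathcal{T}}(g_{t},h_{t})}$, tend vers l'infini, et donc $d_{\mathcal{T}}(h_{t+c},h_{t})=d_{\mathcal{T}}(g_{t},h_{t})\to+\infty$ lorsque $t\to+\infty$. Je n'anticipe pas d'obstacle substantiel : toute la substance analytique est d\'ej\`a absorb\'ee dans la proposition \ref{pr:equiv_elem} et dans l'implication de divergence encadr\'ee; le travail restant consiste essentiellement \`a reconna\^itre que deux positions distinctes sur une m\^eme ligne d'\'etirement fournissent pr\'ecis\'ement le couple $(g,h)$ de poids avec $\delta_{j}(g,h)<0$ auquel ce r\'esultat s'applique, ce qui produit en outre, par asym\'etrie des r\^oles, l'illustration annonc\'ee du caract\`ere non sym\'etrique de $d_{\mathcal{T}}$ le long de la ligne.
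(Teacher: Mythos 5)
Votre preuve est correcte et suit essentiellement la même démarche que celle de l'article : l'égalité $d_{\mathcal{T}}(h_{t},h_{t+c})=c$ vient du caractère géodésique de la ligne, et la limite s'obtient en appliquant l'implication encadrée de la section précédente au couple $g=h_{c}$, $h=h_{0}$, pour lequel $\delta_{j}(h_{c},h_{0})=w_{j}(h)(1-e^{c})/2<0$. L'article écrit ce calcul directement sans introduire la notation $g_{t}=h_{t+c}$, mais c'est exactement le même argument.
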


\begin{proof}[D\'emonstration]
L'\'egalit\'e $d_{\mathcal{T}}(h_{t},h_{t+c})=c$ vient du fait que la ligne d'\'etirement est une g\'eod\'esique.
Pour \'etablir la limite, notez que, pour tout $j\in\{1,\cdots,M\}$, 
$\delta_{j}(h_{c},h_{0})=(w_{j}(h)-w_{j}(h_{c}))/2=w_{j}(h)(1-e^{c})/2<0$.
Donc $\lim_{t\to\infty}d_{\mathcal{T}}(h_{t+c},h_{t})=\infty$.
\end{proof}

Nous pouvons maintenant d\'emontrer le r\'esultat qui suit.

\begin{proposition}
Deux lignes d'\'etirement \'el\'ementaires dont les directions sont des 
classes projectives distinctes de la m\^eme multicourbe divergent.
\end{proposition}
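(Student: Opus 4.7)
The plan is to exploit the asymptotic formula from Proposition~\ref{pr:equiv_elem}: along a cylindrical elementary stretch line whose horocyclic lamination puts weight $w_j$ on the component $\lambda_j$, the length of $\lambda_j$ at time $t$ is equivalent to $2\sqrt{K_j}\,e^{-e^{t}w_j/2}$. Here the two stretch lines $t\mapsto g_t$ and $t\mapsto h_t$ share the underlying multicurve $\lambda=\lambda_1\cup\cdots\cup\lambda_M$ as horocyclic lamination but with non-proportional weight vectors $(w_j(g))_j$ and $(w_j(h))_j$. My strategy is to find a translation $c\in\mathbb{R}$ of the second parameter such that $t\mapsto g_t$ and $t\mapsto h_{t+c}$ diverge; this is enough, since divergence of elementary cylindrical lines is to be understood up to reparametrization.

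First I set $m=\min_{j} w_j(g)/w_j(h)$ and $M=\max_{j} w_j(g)/w_j(h)$. The non-proportionality of the two weight vectors gives $m<M$, so I may pick any $c\in\mathbb{R}$ with $e^c\in(m,M)$. Applying Proposition~\ref{pr:equiv_elem} separately to each line (after the shift) yields, for every $j$,
\begin{equation*}
\frac{\ell_{h_{t+c}}(\lambda_{j})}{\ell_{g_{t}}(\lambda_{j})}\sim K_{j}\,\exp\!\Bigl(-\tfrac{e^{t}}{2}\bigl(e^{c}w_{j}(h)-w_{j}(g)\bigr)\Bigr).
\end{equation*}
By the choice of $c$, the quantity $e^{c}w_{j}(h)-w_{j}(g)$ is strictly negative at any index $j_{*}$ realizing $M$ and strictly positive at any index $j^{*}$ realizing $m$. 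Since the exponential of the Thurston distance dominates every length ratio $\ell_{v}(\lambda)/\ell_{u}(\lambda)$ with $\lambda\in\mathcal{ML}(\Sigma)$, I deduce $d_{\mathcal{T}}(g_t,h_{t+c})\to\infty$ from $j_{*}$ and $d_{\mathcal{T}}(h_{t+c},g_t)\to\infty$ from $j^{*}$, which is exactly divergence in the sense of the introduction.

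The only real subtlety is the need for the shift $c$: this is precisely the asymmetry phenomenon pointed out in the second theorem of the introduction. Without reparametrization, one could very well have $w_j(h)>w_j(g)$ for every $j$, in which case the naive comparison at equal parameter forces $d_{\mathcal{T}}(g_t,h_t)\to\infty$ but gives no information on $d_{\mathcal{T}}(h_t,g_t)$. Choosing $e^c$ strictly between $m$ and $M$ forces the sign of $e^{c}w_{j}(h)-w_{j}(g)$ to change with $j$, which in turn forces both Thurston distances to blow up. Beyond this combinatorial observation on the weight vectors, the argument is a direct application of the asymptotic already established in the previous section, and I do not expect any further obstacle.
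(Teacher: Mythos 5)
Your proof is correct and follows essentially the same route as the paper: both arguments reparametrize one line by a translation so that the sign of $e^{c}w_{j}(h)-w_{j}(g)$ changes with $j$, then use the asymptotic $\ell_{h_{t}}(\lambda_{j})\sim 2\sqrt{K_{j}}\,e^{-e^{t}w_{j}/2}$ and the fact that length ratios bound $e^{d_{\mathcal{T}}}$ from below to force both distances to blow up. The only (cosmetic) difference is that you locate the shift via the minimum and maximum of the ratios $w_{j}(g)/w_{j}(h)$, which handles all $M$ components at once, whereas the paper reduces to the case $M=2$ with a normalization before solving the same sign condition.
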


\begin{proof}[D\'emonstration]
Consid\'erons deux lignes d'\'etirement comme dans l'\'enonc\'e de la proposition.
Notons $\mu$ et $\nu$ leurs supports.
Fixons un point base $g$ sur la ligne d'\'etirement de support $\mu$ et 
un point base $h$ sur la ligne d'\'etirement de support $\nu$.
Par hypoth\`ese, les directions des deux lignes sont des classes projectives associ\'ees 
\`a la m\^eme multicourbe $\lambda=\lambda_{1}\cup\cdots\cup\lambda_{M}$.
Notons $(w_{1},\cdots,w_{M})$ et $(w'_{1},\cdots,w'_{M})$ les pond\'erations sur les composantes de $\lambda$
qui co\"incident avec les largeurs des cylindres des feuilletages horocycliques $F_{g}(\mu)$ et $F_{h}(\nu)$
respectivement.
Par hypoth\`ese, les vecteurs $(w_{1},\cdots,w_{M})$ et $(w'_{1},\cdots,w'_{M})$ ne sont pas colin\'eaires.

Fixons la param\'etrisation $t\mapsto g_{t}$ pour la ligne d'\'etirement passant par $g$ et de support $\mu$
de telle sorte que $g_{0}=g$.
Pour montrer que les lignes divergent, il suffit de trouver une param\'etrisation  $t\mapsto h_{t}$ de la ligne passant par $h$ et de support $\nu$
telle que les distances $d_{\mathcal{T}}(g_{t},h_{t})$ et $d_{\mathcal{T}}(h_{t},g_{t})$
tendent vers l'infini lorsque $t$ tend vers $+\infty$.
Cela revient \`a choisir le point base $h_{0}$ de la param\'etrisation.

D'apr\`es ce qui pr\'ec\`ede, il suffit de montrer qu'il existe un point base $h_{0}$ et deux indices $j_{0},j_{1}\in\{1,\cdots,M\}$ 
tels que $\delta_{j_{0}}(g,h_{0})=(w_{j_{0}}(h_{0})-w_{j_{0}})/2<0$ et $\delta_{j_{1}}(h_{0},g)=(w_{j_{1}}-w_{j_{1}}(h_{0}))/2<0$.
Pour tout $j\in\{1,\cdots,M\}$, il existe un unique $u\in\mathbb{R}$ tel que $w_{j}(h_{0})=e^{u}\,w_{j}'$.
Ainsi, nous sommes ramen\'es \`a prouver l'existence d'un nombre r\'eel $u$ et de deux indices $j_{0},j_{1}\in\{1,\cdots,M\}$ 
tels que $e^{u}\,w'_{j_{0}}<w_{j_{0}}$ et $e^{u}\,w'_{j_{1}}>w_{j_{1}}$.

On peut sans perte de g\'en\'eralit\'e supposer que $M=2$ et que, quitte \`a modifier les points base $g$ et $h$, on ait $w_{1}=w'_{1}=1$.
Posons $w_{2}=w$ et $w'_{2}=w'$.
La non colin\'earit\'e des vecteurs $(1,w)$ et $(1,w')$ signifie que le rapport $w'/w$ n'est pas \'egal \`a un.
Quitte \`a \'echanger les r\^oles des lignes d'\'etirement, on peut supposer que $w'/w>1$.
On cherche donc un nombre r\'eel $u$ tel que 
$$
\log(e^{u}/1)\log(e^{u}w'/w)<0,
$$
c'est-\`a-dire tel que
$$
u(u+\log(w'/w))<0.
$$
Il est facile de voir que n'importe quel nombre $u$ dans l'intervalle $]0,\log(w'/w)[$ convient.
La d\'emonstration est achev\'ee.
\end{proof}

\section{Divergence des lignes cylindriques}

Nous passons maintenant au cas g\'en\'eral, o\`u $\mu$ n'est plus n\'ecessairement \'el\'ementaire.

\begin{theorem}
\label{th:asymp}
Soit $\mu$ une lamination g\'eod\'esique compl\`ete r\'ecurrente par cha\^ines.
Consid\'erons une ligne d'\'etirement cylindrique, $t\mapsto h_{t}$, 
de support $\mu$ et de direction la classe projective d'une multicourbe $\lambda$ \'equip\'ee d'une pond\'eration fix\'ee.
Si $\lambda_{j}$ d\'esigne une composante de $\lambda$ et $w_{j}$ le poids associ\'e, on a l'estimation suivante, 
quand $t$ tend vers l'infini,
$$
\ell_{h_{t}}(\lambda_{j})\sim 2\sqrt{K_{j}}e^{-e^{t}w_{j}/2},
$$
o\`u $K_{j}\in\mathbb{N}$ est le nombre de r\'egions non-feuillet\'ees du feuilletage horocyclique adjacentes \`a un bord du cylindre 
de c\oe ur $\lambda_{j}$ multipli\'e par le nombre de r\'egions non-feuillet\'ees adjacentes \`a l'autre bord. 
\end{theorem}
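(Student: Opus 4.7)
Le plan est d'\'etendre au cas r\'ecurrent par cha\^ines l'analyse men\'ee \`a la sous-section 3.1 pour les supports \'el\'ementaires. L'observation cl\'e est que la d\'emonstration de la Proposition \ref{pr:equiv_elem} ne d\'epend du caract\`ere \'el\'ementaire de $\mu$ qu'\`a travers la d\'ecomposition en bandes du cylindre $C_{j}$. Tout le reste du calcul asymptotique (expression de $h^{*}(d)$, encadrement $h\leq\ell\leq h^{*}$, formule pour $\cosh(h)$ et r\'ecurrence sur les fractions continues qui exprime $x_{2N-1}$ et $x_{2N}$) est strictement local au cylindre et ne fait intervenir $\mu$ qu'\`a travers le comportement asymptotique des arcs d'horocycles dans les bandes.

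Je fixerais d'abord une composante $\lambda_{j}$ et son cylindre $C_{j}$, cl\^oture de la r\'eunion des feuilles du feuilletage horocyclique isotopes \`a $\lambda_{j}$. Les deux bords de $C_{j}$ sont encore des feuilles ferm\'ees singuli\`eres, r\'eunions finies d'arcs d'horocycles qui sont c\^ot\'es des r\'egions non-feuillet\'ees adjacentes. Notons $p$ et $q$ les nombres de ces arcs sur chacun des bords ; par d\'efinition $K_{j}=pq$. Puisque le nombre total de r\'egions non-feuillet\'ees est fini (born\'e par celui des triangles id\'eaux compl\'ementaires de $\mu$), les entiers $p$ et $q$ sont bien d\'efinis.

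Ensuite, je chercherais \`a \'etablir l'existence, pour $\mu$ r\'ecurrent par cha\^ines, d'une d\'ecomposition de $C_{j}$ en un nombre fini $2N$ de bandes maximales d'\'epaisseurs $a_{1},\ldots,a_{2N}$ altern\'ees d'un bord \`a l'autre. La d\'efinition \`a g\'en\'eraliser est celle o\`u les bandes sont d\'etermin\'ees par les c\^ot\'es non-horocycliques des r\'egions non-feuillet\'ees adjacentes \`a $C_{j}$. Les calculs de la sous-section 3.1 dans chaque bande se transposeraient alors tels quels : une bande reste isom\'etrique \`a un rectangle du demi-plan hyperbolique bord\'e par deux g\'eod\'esiques asymptotes et deux arcs d'horocycles, et la pr\'esence \'eventuelle de feuilles non-isol\'ees de $\mu$ \`a l'int\'erieur n'affecte ni la structure hyperbolique ni le feuilletage horocyclique de la bande. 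On obtiendrait la m\^eme formule $h^{*}(d)=a_{p}\,e^{-d}+a_{i}\,e^{-w+d}$, puis les limites $a_{p}(t)\to p$ et $a_{i}(t)\to q$ (seuls survivent lorsque $t\to+\infty$ les arcs d'horocycles de longueur exactement $1$, c'est-\`a-dire ceux qui bordent une r\'egion non-feuillet\'ee), d'o\`u l'\'equivalent annonc\'e $\ell_{h_{t}}(\lambda_{j})\sim 2\sqrt{pq}\,e^{-e^{t}w_{j}/2}$ via l'encadrement $h(t)\leq\ell(t)\leq h^{*}(t)$.

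La principale difficult\'e r\'eside pr\'ecis\'ement dans l'\'etape pr\'ec\'edente : la d\'ecomposition en bandes de la sous-section 3.1 reposait explicitement sur le caract\`ere isol\'e des feuilles de $\mu$, hypoth\`ese qui tombe lorsque $\mu$ est seulement r\'ecurrent par cha\^ines. Une strat\'egie naturelle consiste \`a exploiter la r\'ecurrence par cha\^ines pour approcher $\mu$, au sens de Hausdorff, par une suite de multicourbes -- ou mieux, par une suite de laminations \'el\'ementaires obtenues en compl\'etant ces multicourbes par des spirales -- et \`a passer \`a la limite dans la Proposition \ref{pr:equiv_elem}. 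Il faudra alors v\'erifier deux points d\'elicats : d'une part, que la d\'ecomposition combinatoire du cylindre $C_{j}$ (nombre de bandes, nombre $K_{j}$ de r\'egions non-feuillet\'ees adjacentes) se stabilise pour les approximants, ce qui devrait r\'esulter de la rigidit\'e locale du feuilletage horocyclique vis-\`a-vis de la convergence de Hausdorff du support ; d'autre part, que les \'epaisseurs $a_{k}(t)$ convergent uniform\'ement en $t$ vers leurs valeurs limites enti\`eres, ce qui permettra de passer \`a la limite dans les \'equivalents asymptotiques obtenus pour les approximants \'el\'ementaires.
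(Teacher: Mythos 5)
Votre proposition identifie correctement la difficult\'e centrale --- l'absence de d\'ecomposition canonique en bandes lorsque les feuilles de $\mu$ traversant le cylindre ne sont plus isol\'ees --- mais elle ne la r\'esout pas, et les deux strat\'egies que vous esquissez laissent chacune un trou s\'erieux. La premi\`ere (d\'ecouper $C_{j}$ en un nombre fini de bandes d\'elimit\'ees par les c\^ot\'es des r\'egions non-feuillet\'ees) ne marche pas telle quelle : la portion du cylindre comprise entre deux tron\c cons cons\'ecutifs est travers\'ee en g\'en\'eral par une infinit\'e non d\'enombrable de feuilles de $\mu$ appartenant \`a des triangles id\'eaux distincts ; elle n'est donc pas isom\'etrique \`a un rectangle du demi-plan dont les bords horizontaux seraient des horocycles centr\'es en un m\^eme point id\'eal, et l'affirmation selon laquelle les feuilles non-isol\'ees "n'affectent ni la structure hyperbolique ni le feuilletage horocyclique de la bande" est fausse pour une telle r\'egion. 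La seconde strat\'egie (approcher $\mu$ au sens de Hausdorff par des laminations \'el\'ementaires $\mu_{n}$ et passer \`a la limite dans la proposition \ref{pr:equiv_elem}) revient \`a intervertir les limites $n\to\infty$ et $t\to\infty$ : or toute la construction (feuilletage horocyclique, cylindres, largeurs, ligne d'\'etirement elle-m\^eme) d\'epend du support, de sorte que les quantit\'es $\ell_{h_{t}^{(n)}}(\lambda_{j})$ vivent sur des g\'eod\'esiques diff\'erentes de celle \'etudi\'ee ; la "convergence uniforme en $t$" que vous mentionnez comme point \`a v\'erifier est pr\'ecis\'ement tout le probl\`eme, et la combinatoire des r\'egions non-feuillet\'ees adjacentes (donc $K_{j}$) n'a aucune raison d'\^etre la m\^eme pour $\mu_{n}$ et pour $\mu$.

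Le papier n'approche pas le support : il travaille directement avec $\mu$ en s\'eparant les deux in\'egalit\'es. Pour la majoration, aucune d\'ecomposition en bandes n'est n\'ecessaire : $\mu$ \'etant de mesure de Lebesgue nulle, la longueur d'une feuille du feuilletage horocyclique est la somme d\'enombrable des longueurs des arcs d'horocycles qui la composent, somme ind\'ependante de l'ordre des termes ; en r\'eorganisant ces arcs on se ram\`ene au cas de deux bandes d'\'epaisseurs totales $a_{p}(t)$ et $a_{i}(t)$, d'o\`u $\ell(t)\leq h^{*}(t)=2\sqrt{a_{p}(t)a_{i}(t)}\,e^{-w(t)/2}$. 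Pour la minoration, on ne garde du cylindre que les tron\c cons (bandes minimales d'\'epaisseur $1$, bord\'ees par un c\^ot\'e de r\'egion non-feuillet\'ee), qui sont en nombre fini ; le cylindre $C'$ ainsi obtenu rel\`eve des calculs de la section 3, et l'in\'egalit\'e cl\'e $h'(t)\leq h(t)$, \'etablie dans \cite{the2} (la hauteur cro\^it quand on ins\`ere une bande), fournit $h'(t)\leq h(t)\leq\ell(t)$. Les deux bornes \'etant \'equivalentes \`a $2\sqrt{K_{j}}\,e^{-e^{t}w_{j}/2}$, on conclut. Ce sont ces deux ingr\'edients --- le r\'earrangement des arcs pour la borne sup\'erieure, et la monotonie de la hauteur par insertion de tron\c cons pour la borne inf\'erieure --- qui manquent \`a votre argument.
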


Avant de commencer la d\'emonstration, faisons quelques remarques.
Soit $C$ un cylindre du feuilletage horocyclique, de largeur $w(t)=e^{t}w$.
Le cylindre $C$ est travers\'e en g\'en\'eral par un nombre ind\'enombrable de feuilles de $\mu$ et il
n'y a pas de notion canonique de d\'ecomposition en bandes dans ce cas.
Nous allons utiliser une approximation finie de $\mu$ et sa d\'ecomposition en bandes.

Avant cela, montrons qu'il existe une unique feuille du feuilletage horocyclique de $C$ dont la longueur est minimale.
Tout d'abord, la longueur d'une feuille $\alpha$ du feuilletage horocyclique est la somme des longueurs des arcs d'horocycles 
de $\alpha\setminus\mu$, car la lamination $\mu$ est de mesure de Lebesgue nulle.
Ensuite, le nombre de ces arcs d'horocycles est d\'enombrable et la longueur de $\alpha$ ne d\'epend pas de l'ordre dans lequel
ces arcs apparaissent.
Ainsi, quitte \`a r\'eorganiser les arcs d'horocycles, 
on peut supposer que le cylindre est compos\'e de deux bandes et que 
la feuille $\alpha$ est compos\'ee de deux arcs d'horocycles dont les convexit\'es 
sont de signes oppos\'es.
Nous avons vu pr\'ec\'edemment qu'il y a dans ce cas un unique minimum $h^{*}(t)$ valant
$$
h^{*}(t)=2\sqrt{a_{p}(t)\,a_{i}(t)}\,e^{-w(t)/2},
$$
o\`u $a_{p}(t)$ et $a_{i}(t)$ d\'esignent les \'epaisseurs des deux bandes.\\

On appelle \textbf{tron\c con} une bande minimale d'\'epaisseur 1, c'est-\`a-dire 
une bande minimale dont le bord contient le c\^ot\'e d'une r\'egion non-feuillet\'ee.

Consid\'erons l'approximation suivante du cylindre $C$.
Pour tout $t$, soit $C'$ le cylindre obtenu en ne gardant du cylindre $C$ que les
tron\c cons et en recollant les bords g\'eod\'esiques de ces tron\c cons par des isom\'etries.
Notez que la largeur de $C'$ est \'egalement $w(t)=e^{t}w$.
Notons $h'(t)$ la hauteur du cylindre $C'$ et $h(t)$ la hauteur du cylindre $C$.

\begin{proof}[D\'emonstration du th\'eor\`eme \ref{th:asymp}]
La d\'emonstration repose sur l'in\'egalit\'e suivante :
\begin{center}
\framebox[1,2\width][c]{
$\forall t,\ h'(t)\leq h(t)$.}
\end{center}

Cette in\'egalit\'e \`a \'et\'e \'etablie dans \cite{the2}, p. 398, Lemma 2.8.
Sa preuve consiste \`a remarquer que si l'on ins\`ere un tron\c con \`a $C'$ pour obtenir un cylindre $C''$, 
la nouvelle hauteur $h''(t)$ est strictement plus grande que celle de $C'$.
Un petit dessin dans le demi-plan sup\'erieur comme celui de \cite{the2} permet de s'en convaincre.

On a les encadrements
$$
h'(t)\leq h(t)\leq \ell(t)\leq h^{*}(t),
$$
o\`u $\ell(t)$ d\'esigne la longueur du c\oe ur du cylindre $C$ pour la m\'etrique $h_{t}$.

On a vu pr\'ec\'edemment que les termes $h'(t)$ et $h^{*}(t)$ sont tous deux \'equivalents, 
lorsque $t$ tend vers l'infini, \`a
$$
2\sqrt{K}\,e^{-w(t)/2},
$$
o\`u $K\in\mathbb{N}$ est le nombre de r\'egions non-feuillet\'ees adjacentes \`a un c\^ot\'e de $C$ multipli\'e
par le nombre de r\'egions non-feuillet\'ees adjacentes \`a l'autre c\^ot\'e de $C$.
La d\'emonstration est termin\'ee.
\end{proof}

On en d\'eduit imm\'ediatement, par les m\^emes d\'emonstrations, les versions g\'en\'erales des th\'eor\`emes \'etablis 
dans le cadre des lignes d'\'etirement \'el\'ementaires.

\begin{theorem}
\label{th:div}
Deux lignes d'\'etirement cylindriques dont les directions correspondent \`a des 
classes projectives distinctes de la m\^eme multicourbe divergent.
\end{theorem}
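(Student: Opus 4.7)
Mon plan est de reprendre mot pour mot la strat\'egie mise au point pour le cas \'el\'ementaire (proposition non num\'erot\'ee pr\'ec\'edente), en substituant \`a la proposition \ref{pr:equiv_elem} le th\'eor\`eme \ref{th:asymp} qui vient d'\^etre \'etabli. Plus pr\'ecis\'ement, je fixerai un point base $g$ sur la premi\`ere ligne d'\'etirement, de support $\mu$ et param\'etr\'ee $t\mapsto g_{t}$ avec $g_{0}=g$, puis il restera \`a choisir un point base $h_{0}$ sur la seconde ligne, de support $\nu$ et param\'etr\'ee $t\mapsto h_{t}$. Je noterai $\lambda=\lambda_{1}\cup\cdots\cup\lambda_{M}$ la multicourbe commune aux deux directions, et $w_{j}(g)$, $w_{j}(h_{0})$ les poids respectifs sur ses composantes.

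La premi\`ere \'etape consiste \`a former le rapport $\ell_{h_{t}}(\lambda_{j})/\ell_{g_{t}}(\lambda_{j})$. Le th\'eor\`eme \ref{th:asymp} donne, pour chacune des deux lignes, un \'equivalent de la forme $2\sqrt{K_{j}}e^{-e^{t}w_{j}/2}$ ; les constantes multiplicatives $K_{j}$ ne d\'ependent que de $\mu$ et $\nu$, donc se simplifient dans le rapport, qui devient \'equivalent \`a $K\,e^{-e^{t}\delta_{j}(g,h_{0})}$ avec $\delta_{j}(g,h_{0})=(w_{j}(h_{0})-w_{j}(g))/2$ et $K>0$. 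Comme ce rapport minore $\exp(d_{\mathcal{T}}(g_{t},h_{t}))$, il suffit, pour prouver la divergence des deux lignes (apr\`es reparam\'etrage), d'exhiber un choix de $h_{0}$ et deux indices $j_{0},j_{1}$ tels que $\delta_{j_{0}}(g,h_{0})<0$ et $\delta_{j_{1}}(h_{0},g)<0$.

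La derni\`ere \'etape reprendra l'argument d'alg\`ebre lin\'eaire \'el\'ementaire d\'ej\`a employ\'e dans le cas \'el\'ementaire : puisque les poids de $h_{0}$ le long de sa ligne d'\'etirement sont multipli\'es par un facteur commun $e^{u}$ quand on d\'eplace $h_{0}$, et puisque par hypoth\`ese les vecteurs $(w_{j}(g))_{j}$ et $(w'_{j})_{j}$ (donnant les poids initiaux) ne sont pas colin\'eaires, il existe $u\in\mathbb{R}$ rendant certaines composantes $e^{u}w'_{j_{0}}$ strictement inf\'erieures \`a $w_{j_{0}}(g)$ et d'autres composantes $e^{u}w'_{j_{1}}$ strictement sup\'erieures \`a $w_{j_{1}}(g)$. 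Cela se ram\`ene, apr\`es normalisation comme dans la preuve \'el\'ementaire, \`a trouver $u$ dans un intervalle ouvert non vide.

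Je ne pr\'evois aucun obstacle nouveau : toute la difficult\'e du th\'eor\`eme a \'et\'e absorb\'ee dans le th\'eor\`eme \ref{th:asymp}, dont la preuve requ\'erait l'approximation par tron\c cons et l'in\'egalit\'e $h'(t)\leq h(t)$ reprise de \cite{the2}. D\`es que l'on dispose de l'\'equivalent asymptotique pour $\ell_{h_{t}}(\lambda_{j})$ dans le cadre g\'en\'eral chain-r\'ecurrent, le passage du cas \'el\'ementaire au cas g\'en\'eral est purement formel, ce qui justifie le commentaire laconique du texte : "par les m\^emes d\'emonstrations".
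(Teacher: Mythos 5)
Votre proposition est correcte et suit exactement la d\'emarche du papier, qui se contente d'indiquer que le cas g\'en\'eral s'obtient \og par les m\^emes d\'emonstrations \fg{} une fois le th\'eor\`eme \ref{th:asymp} acquis ; vous explicitez fid\`element cette substitution (rapport des longueurs, signe des $\delta_{j}$, choix du point base via l'argument de non-colin\'earit\'e). Seule nuance de formulation : les constantes $K_{j}$ des deux lignes ne \og se simplifient \fg{} pas n\'ecessairement (elles peuvent diff\'erer selon $\mu$ et $\nu$), mais leur rapport reste une constante strictement positive, ce qui suffit.
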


\begin{theorem}
Soit $t\mapsto h_{t}$ une ligne d'\'etirement cylindrique.
Pour tout nombre $c>0$, on a 
$$
\forall t,\ d_{\mathcal{T}}(h_{t},h_{t+c})=c\ \textrm{et}\ \lim_{t\to\infty}d_{\mathcal{T}}(h_{t+c},h_{t})=\infty.
$$
\end{theorem}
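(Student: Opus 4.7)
The plan is to mimic the proof of the analogous proposition from Section 4 verbatim, replacing the elementary asymptotic estimate by the general one supplied by Theorem~\ref{th:asymp}. The first equality $d_{\mathcal{T}}(h_{t},h_{t+c})=c$ is immediate: by construction a stretch line is a geodesic parameterized by arc length for $d_{\mathcal{T}}$, and the introduction already spells this out as the defining property.

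For the divergence, I would focus on a single component $\lambda_{j}$ of the direction multicurve. If $w_{j}$ is the weight of $\lambda_{j}$ in the initial horocyclic lamination $\lambda_{\mu}(h_{0})$, then the scaling relation $F_{\mu}(h_{t})=e^{t}F_{\mu}(h_{0})$ shows that the width of the corresponding cylinder $C_{j}$ at time $t$ equals $e^{t}w_{j}$. Theorem~\ref{th:asymp}, applied with base point $h_{t}$ (or equivalently by substituting $t\mapsto t+s$ in the stated equivalence), then gives
$$
\ell_{h_{t+s}}(\lambda_{j})\sim 2\sqrt{K_{j}}\,e^{-e^{t+s}w_{j}/2}
$$
as $t\to\infty$, uniformly for $s\in\{0,c\}$ since $c$ is fixed.

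The next step is to form the ratio that controls the Thurston distance. Using the above equivalence for $s=0$ and $s=c$, one obtains
$$
\frac{\ell_{h_{t}}(\lambda_{j})}{\ell_{h_{t+c}}(\lambda_{j})}\sim\exp\!\Bigl(\tfrac{w_{j}}{2}\,e^{t}(e^{c}-1)\Bigr),
$$
which tends to $+\infty$ as $t\to\infty$, because $c>0$ and $w_{j}>0$. Since the defining formula for $d_{\mathcal{T}}$ gives
$$
d_{\mathcal{T}}(h_{t+c},h_{t})\geq\log\frac{\ell_{h_{t}}(\lambda_{j})}{\ell_{h_{t+c}}(\lambda_{j})},
$$
the desired limit $\lim_{t\to\infty}d_{\mathcal{T}}(h_{t+c},h_{t})=\infty$ follows.

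There is really no obstacle in this last step: all the geometric work has already been absorbed into Theorem~\ref{th:asymp}, and the argument is formally identical to the one used in the elementary case (the proposition proved in Section~4), with the quantity $\delta_{j}(h_{t+c},h_{t})=(e^{t}w_{j}-e^{t+c}w_{j})/2=-e^{t}w_{j}(e^{c}-1)/2<0$ playing the same role as the elementary $\delta_{j}$ there. The only subtlety worth checking is that Theorem~\ref{th:asymp} is applicable at every base point $h_{s}$ along the line, i.e.\ that the integers $K_{j}$ and the combinatorial structure of the cylinder decomposition do not depend on $s$; this is clear since translating along the stretch line only rescales the transverse measure of $F_{\mu}$ and does not modify the underlying horocyclic foliation combinatorics.
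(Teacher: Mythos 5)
Your proof is correct and follows essentially the same route as the paper, which simply asserts that the general statement follows \emph{par les m\^emes d\'emonstrations} as in the elementary case once Theorem~\ref{th:asymp} replaces Proposition~\ref{pr:equiv_elem}; your computation of the ratio $\ell_{h_{t}}(\lambda_{j})/\ell_{h_{t+c}}(\lambda_{j})\sim\exp(\tfrac{w_{j}}{2}e^{t}(e^{c}-1))$ is exactly the quantity $e^{-e^{t}\delta_{j}}$ with $\delta_{j}=w_{j}(1-e^{c})e^{t}/2\cdot e^{-t}$, i.e.\ the same $\delta_{j}<0$ argument made explicit. The remark that $K_{j}$ and the cylinder combinatorics are invariant along the line is the right point to check and is handled correctly.
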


\begin{remark}
Consid\'erons plusieurs composantes $\lambda_{j},\ j\in J\subset\{1,\cdots,M\}$ de $\lambda$.
Posons $w_{min}:=\inf_{j\in J}\{w_{j}\}$ et $J_{min}:=\{j\in J\ :\ w_{j}=w_{min}\}$.
On obtient l'estimation suivante, quand $t$ tend vers l'infini,
$$
\ell_{h_{t}}(\cup_{j\in J}\lambda_{j})\sim 2\Big{(}\sum_{j\in J_{min}}\sqrt{K_{j}}\Big{)}e^{-e^{t}w_{min}/2}.
$$
\end{remark}

\section{Le probl\`eme des lignes parall\`eles et divergentes}

Nous venons de d\'emontrer que deux lignes cylindriques de directions diff\'erentes mais topologiquement \'egales 
divergent.
En fait, on a le r\'esultat g\'en\'eral suivant.

\begin{proposition}
Deux lignes d'\'etirement cylindriques sont parall\`eles seulement si elles ont la m\^eme direction.
\end{proposition}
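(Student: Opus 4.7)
The plan is to establish the contrapositive: if the two cylindrical stretch lines $t\mapsto g_t$ (of direction $[\lambda']$) and $t\mapsto h_t$ (of direction $[\lambda]$) have distinct projective directions, then they cannot be parallel. In fact I would prove the stronger statement that after a suitable reparameterization they diverge, which immediately rules out parallelism. The argument splits in two cases, according to whether the unweighted multicurves underlying $[\lambda]$ and $[\lambda']$ coincide. If they do, then $[\lambda]$ and $[\lambda']$ are distinct projective weightings of a single multicurve, and Theorem~\ref{th:div} already furnishes divergence.

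The remaining case is when the underlying multicurves differ. After exchanging the two lines if necessary, one may assume there is a component $c$ of $\lambda$ that is not a component of $\lambda'$. Theorem~\ref{th:asymp} applied to $t\mapsto h_t$ gives $\ell_{h_t}(c)\sim 2\sqrt{K}\,e^{-e^t w/2}$, i.e.\ super-exponential decay to $0$. To conclude divergence of $d_{\mathcal{T}}(h_t,g_t)$, one needs $\ell_{g_t}(c)$ not to decay at that rate. If $c$ meets some component of $\lambda'$ with positive intersection number, the collar lemma, combined with the fact that such a component has $\ell_{g_t}\to 0$ (Theorem~\ref{th:asymp}), forces $\ell_{g_t}(c)\to\infty$. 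If $c$ is disjoint from $\lambda'$, one argues that $\ell_{g_t}(c)$ stays bounded below by a positive constant, using the explicit geometry of the horocyclic foliation outside a neighborhood of $\lambda'$. In either subcase $\ell_{g_t}(c)/\ell_{h_t}(c)\to\infty$, and hence $d_{\mathcal{T}}(h_t,g_t)\to\infty$.

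For the reverse divergence $d_{\mathcal{T}}(g_t,h_t)\to\infty$, one looks for an analogous curve $c'$. If there is a component of $\lambda'$ not in $\lambda$, the argument is symmetric. Otherwise $\lambda'\subsetneq\lambda$ as multicurves, in which case I would reparameterize the $g$-line so that the rates of decay on components common to $\lambda$ and $\lambda'$ match those of the $h$-line (using that the exponent $e^t w_j/2$ in Theorem~\ref{th:asymp} can be adjusted by a time-shift); the extra components of $\lambda$, which have no counterpart on the $g$-line, then supply a ratio blowing up in the opposite direction.

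The main obstacle is the lower bound on $\ell_{g_t}(c)$ when $c$ is disjoint from but not a component of $\lambda'$. Theorem~\ref{th:asymp} pins down decay rates only for components of the direction multicurve, so one must argue separately that other curves disjoint from $\lambda'$ remain of length bounded away from $0$ along the stretch line. This should follow from the local description of the horocyclic foliation, whose non-foliated ideal-triangle regions are of bounded type, together with classical lower bounds on the injectivity radius outside a collar of the pinching multicurve.
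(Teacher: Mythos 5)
Your overall architecture is the paper's: you prove the contrapositive, split according to whether the unweighted multicurves underlying the two directions coincide, dispose of the coincident case with Theorem~\ref{th:div}, and use ratios of lengths as lower bounds for $e^{d_{\mathcal{T}}}$. The difference is in the topologically distinct case, which the paper settles in a few lines by quoting the main theorem of \cite{the1}: along a stretch line, $\ell_{h_t}(\alpha)$ tends to $0$, tends to $\infty$, or stays bounded in $\mathbb{R}_{+}^{*}$ according to the position of $\alpha$ relative to the relevant lamination. Applied to a component of $\lambda$ and to a component of $\lambda'$, that single citation produces both divergent ratios at once, independently of the parameterization. You instead try to rebuild the needed estimates by hand, and this is where the proposal does not close.

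The gap you yourself flag is genuine and sits exactly where the paper reaches for \cite{the1}: when $c$ is a component of $\lambda$ disjoint from $\lambda'$ but not among its components, you need $\ell_{g_t}(c)$ bounded away from $0$, and appealing to ``the explicit geometry of the horocyclic foliation outside a neighborhood of $\lambda'$'' is a promise, not a proof. It is also not an innocent point: one must exclude that $c$ gets pinched for some other reason along the $g$-line (for instance through its interaction with the stump of the support of that line), and the trichotomy of \cite{the1} is precisely the statement that this does not happen. A second soft spot is the subcase $\lambda'\subsetneq\lambda$, where you propose to ``match the rates of decay on common components'' by a time shift: a shift $t\mapsto t+a$ rescales all the weights of $\lambda'$ by the single factor $e^{a}$, so several independent rates cannot be matched simultaneously. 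Matching is not what you need anyway. For every shift, the component of $\lambda$ absent from $\lambda'$ already forces $d_{\mathcal{T}}(h_t,g_{t+a})\to\infty$ (so the lines are not parallel), and choosing $a$ large enough makes some common component decay strictly faster along the $g$-line than along the $h$-line, by Theorem~\ref{th:asymp}, which gives $d_{\mathcal{T}}(g_{t+a},h_t)\to\infty$ as well. With the missing lower bound supplied by citing \cite{the1}, as the paper does, and with that adjustment, your argument becomes a correct, slightly more hands-on variant of the paper's proof.
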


\begin{proof}[D\'emonstration]
Montrons la contrapos\'ee, c'est-\`a-dire que deux lignes d'\'etirement cylindriques 
dont les directions sont diff\'erentes ne sont pas parall\`eles.
Notons $\lambda$ et $\lambda'$ les directions de deux lignes d'\'etirement cylindriques avec $\lambda\neq\lambda'$.
Les objets $\lambda$ et $\lambda'$ sont donc les classes projectives de multicourbes pond\'er\'ees. 
Supposons d'abord que les directions $\lambda$ et $\lambda'$ sont topologiquement distinctes.
Le th\'eor\`eme principal de \cite{the1} affirme  qu'on a, le long d'une ligne d'\'etirement $t\mapsto h_{t}$ de souche $\gamma$
et de direction $\lambda$, les comportements asymptotiques suivants.
Soit $\alpha$ une lamination g\'eod\'esique mesur\'ee. 
La longueur $\ell_{h_{t}}(\alpha)$ de $\alpha$
\begin{itemize}
\item converge vers $0$ si $\alpha\subset\gamma$,
\item converge vers l'infini, si $i(\alpha,\gamma)\neq 0$,
\item est born\'ee dans $\mathbb{R}_{+}^{*}$, si $\alpha\cap\gamma=\emptyset$.
\end{itemize}
Par cons\'equent, si $t\mapsto g_{t}$ et $t\mapsto h_{t}$ sont les lignes d'\'etirement de directions $\lambda$ et $\lambda'$, 
les rapports 
$$
\frac{\ell_{h_{t}}(\lambda)}{\ell_{g_{t}}(\lambda)}\ \textrm{et}\ \frac{\ell_{g_{t}}(\lambda')}{\ell_{h_{t}}(\lambda')}
$$
convergent vers l'infini, ce qui montrent que les lignes divergent.
Cette conclusion ne d\'epend pas de la param\'etrisation positive par la longueur d'arc choisie.
Les lignes ne sont donc pas parall\`eles.

Supposons donc que les directions $\lambda$ et $\lambda'$ soient topologiquement identiques.
Le th\'eor\`eme \ref{th:div} permet de conclure que les lignes divergent alors.
La d\'emonstration est achev\'ee.
\end{proof}

Nous allons maintenant \'etablir la r\'eciproque.

\begin{theorem}
Deux lignes d'\'etirement cylindriques sont parall\`eles si et seulement si elles ont la m\^eme direction.
\end{theorem}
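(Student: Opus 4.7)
La direction \og parall\`eles $\Rightarrow$ m\^eme direction \fg{} vient d'\^etre \'etablie par la proposition pr\'ec\'edente ; il reste \`a prouver la r\'eciproque. Soient donc $t\mapsto g_t$ et $t\mapsto h_t$ deux lignes d'\'etirement cylindriques de m\^eme direction, de supports respectifs $\mu$ et $\nu$, et soit $\lambda=\sum_{j=1}^M w_j\lambda_j$ un repr\'esentant multicourbe pond\'er\'e de la classe projective commune. Le plan est d'abord de reparam\'etriser les deux lignes : puisque les laminations horocycliques $\lambda_\mu(g_0)$ et $\lambda_\nu(h_0)$ sont toutes deux projectivement \'equivalentes \`a $\lambda$, un d\'ecalage temporel appropri\'e (choix des points base) ram\`ene au cas o\`u $\lambda_\mu(g_t)=\lambda_\nu(h_t)=e^t\lambda$ pour tout $t$. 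C'est pr\'ecis\'ement ce type de reparam\'etrisation qu'autorise la d\'efinition de lignes parall\`eles rappel\'ee dans l'introduction.

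L'\'etape cl\'e consistera \`a appliquer le th\'eor\`eme \ref{th:asymp} aux deux lignes simultan\'ement, ce qui donne
$$
\ell_{g_t}(\lambda_j)\sim 2\sqrt{K_j^\mu}\,e^{-e^t w_j/2}\quad\text{et}\quad \ell_{h_t}(\lambda_j)\sim 2\sqrt{K_j^\nu}\,e^{-e^t w_j/2},
$$
donc chaque rapport $\ell_{h_t}(\lambda_j)/\ell_{g_t}(\lambda_j)$ converge vers la constante strictement positive et finie $\sqrt{K_j^\nu/K_j^\mu}$ (et sym\'etriquement pour le rapport invers\'e). Pour une lamination mesur\'ee $\alpha$ avec $i(\alpha,\lambda)>0$, la minoration standard $\ell_{g_t}(\alpha)\geq i(\alpha,F_\mu(g_t))=e^t i(\alpha,\lambda)$, combin\'ee \`a la domination asymptotique de la contribution horocyclique dans la longueur hyperbolique, conduit \`a $\ell_{g_t}(\alpha)\sim e^t i(\alpha,\lambda)$ quand $t\to\infty$, et de m\^eme pour $h_t$ ; le rapport $\ell_{h_t}(\alpha)/\ell_{g_t}(\alpha)$ tend alors vers $1$. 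En injectant ces \'equivalences dans la formule $d_\mathcal{T}(g_t,h_t)=\log\sup_\alpha\ell_{h_t}(\alpha)/\ell_{g_t}(\alpha)$, on obtient la bornitude de cette distance, sym\'etriquement pour $d_\mathcal{T}(h_t,g_t)$, d'o\`u le parall\'elisme.

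La difficult\'e principale sera d'obtenir ce contr\^ole \emph{uniform\'ement} sur $\mathcal{ML}(\Sigma)$, et non pas seulement pour chaque $\alpha$ individuellement. L'id\'ee serait d'exploiter la compacit\'e de l'espace projectif des laminations mesur\'ees : si une suite $(\alpha_t)$ r\'ealisait des rapports non born\'es, on en extrairait une sous-suite convergeant projectivement vers une lamination limite pour laquelle les asymptotiques ci-dessus s'appliqueraient, ce qui produirait une contradiction. Un point particuli\`erement d\'elicat est le traitement des laminations contenues dans $\Sigma\setminus\lambda$, dont la longueur d\'epend \emph{a priori} des supports distincts $\mu$ et $\nu$ ; il faudrait exploiter le fait que la structure des cylindres autour de $\lambda$ est asymptotiquement identique pour les deux lignes, ce qui contraint la g\'eom\'etrie des parties \'epaisses restantes \`a \^etre \'egalement comparable.
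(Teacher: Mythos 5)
Votre squelette co\"incide avec celui du papier : l'implication directe vient de la proposition pr\'ec\'edente, puis on borne les rapports de longueurs pour chaque courbe $\alpha$ selon que $\alpha\subset\lambda$ (th\'eor\`eme \ref{th:asymp}), $i(\alpha,\lambda)\neq 0$ (croissance en $e^{t}$ obtenue par l'encadrement dans les cylindres), ou $\alpha\cap\lambda=\emptyset$. Deux de vos \'etapes restent cependant r\'eellement incompl\`etes. D'abord, pour $\alpha\cap\lambda=\emptyset$, vous vous contentez d'\'evoquer une comparaison des \og parties \'epaisses \fg{} ; le papier ne tente pas un tel argument g\'eom\'etrique et invoque le th\'eor\`eme principal de \cite{the1}, qui donne directement que les longueurs des laminations disjointes de la souche restent born\'ees dans $\mathbb{R}_{+}^{*}$ le long de chaque ligne, d'o\`u des rapports born\'es.

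Ensuite, et c'est le point le plus s\'erieux, votre plan pour passer des bornes ponctuelles \`a une borne sur $\sup_{\alpha}\ell_{h_{t}}(\alpha)/\ell_{g_{t}}(\alpha)$ ne fonctionne pas tel quel. Si une suite $(\alpha_{n})$ r\'ealise des rapports non born\'es et si $[\alpha_{n}]\to[\alpha_{\infty}]$ dans $\mathcal{PL}(\Sigma)$, les asymptotiques que vous avez \'etablies ne concernent que des courbes simples ferm\'ees fix\'ees (ou les composantes de $\lambda$), et non la limite $\alpha_{\infty}$, qui est une lamination mesur\'ee quelconque ; de plus, des estimations ponctuelles ne donnent aucun contr\^ole le long de la suite diagonale $(\alpha_{n},t_{n})$. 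Le papier comble pr\'ecis\'ement cette lacune par une proposition s\'epar\'ee : il utilise le fait que les deux lignes convergent vers le m\^eme point du bord de Thurston \cite{pap}, ce qui fournit des normalisations $x_{n},y_{n}$ avec $y_{n}\ell_{h_{n}}(\beta)/x_{n}\ell_{g_{n}}(\beta)\to 1$ pour tout $\beta$ ; l'hypoth\`ese de rapports born\'es pour un seul $\alpha$ force $y_{n}/x_{n}$ \`a \^etre born\'ee ; et c'est la continuit\'e de la fonctionnelle d'intersection sur l'espace des courants g\'eod\'esiques \cite{bon} qui l\'egitime le passage \`a la limite diagonal $\ell_{h_{n}}(\alpha_{n})/\ell_{g_{n}}(\alpha_{n})\to 1/c$, en contradiction avec un rapport minor\'e par $n$. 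Sans cet argument de courants (ou un \'enonc\'e de continuit\'e uniforme \'equivalent), votre esquisse par compacit\'e ne produit pas la contradiction annonc\'ee.
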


\begin{proof}[D\'emonstration]
Il reste \`a montrer que deux lignes d'\'etirement cylindriques de m\^eme directions sont parall\`eles.
Consid\'erons une g\'eod\'esique simple ferm\'ee $\alpha$ et une ligne d'\'etirement $t\mapsto h_{t}$
de direction $\lambda$.
Notons $C_{1},\cdots,C_{M}$ les cylindres associ\'es aux composantes $\lambda_{1},\cdots,\lambda_{M}$ de $\lambda$.
Fixons \'egalement dans chaque cylindre un arcs g\'eod\'esique $\rho_{j}$ de longueur $w_{j}(t)$, $j=1,\cdots,M$.
La courbe $\alpha$ traverse les cylindres et effectue un certains nombres de tours autour des composantes de $\lambda$.
Nous avons donc l'encadrement suivant
$$
\sum_{j=1}^{M}|\alpha\cap\lambda_{j}|w_{j}(t)\leq \ell_{h_{t}}(\alpha)\leq
\sum_{j=1}^{M}\Big{(}|\alpha\cap\lambda_{j}|w_{j}(t)+|\alpha\cap\rho_{j}|\ell_{h_{t}}(\lambda_{j})\Big{)}.
$$
Cet encadrement montre que si $i(\alpha,\lambda)\neq 0$, la longueur $\ell_{h_{t}}(\alpha)$ est de l'ordre de $e^{t}$.

On en d\'eduit que, si  $t\mapsto g_{t}$ est une ligne d'\'etirement de direction $\lambda$, 
les rapports $\frac{\ell_{h_{t}}(\alpha)}{\ell_{g_{t}}(\alpha)}$ et $\frac{\ell_{g_{t}}(\alpha)}{\ell_{h_{t}}(\alpha)}$,
avec $i(\alpha,\lambda)\neq 0$, sont born\'es.
Dans le cas o\`u $\alpha\cap\lambda=\emptyset$, le th\'eor\`eme de l'article \cite{the1} d\'ej\`a cit\'e dans la d\'emonstration pr\'ec\'edente
implique \'egalement que les rapports $\frac{\ell_{h_{t}}(\alpha)}{\ell_{g_{t}}(\alpha)}$ et $\frac{\ell_{g_{t}}(\alpha)}{\ell_{h_{t}}(\alpha)}$ sont born\'es.
Finalement, on a montr\'e plus haut dans le th\'eor\`eme \ref{th:asymp} que si $\alpha\subset\lambda$, les rapports sont eux aussi born\'es.

Nous venons donc d'\'etablir que pour toute multicourbe $\alpha$, il existe un nombre $M(\alpha)>0$ tel que, pour tout $t\geq0$, 
$$
\frac{1}{M(\alpha)}\leq\frac{\ell_{h_{t}}(\alpha)}{\ell_{g_{t}}(\alpha)}\leq M(\alpha).
$$

On conclut la d\'emonstration avec la proposition qui suit, en se souvenant qu'une ligne d'\'etirement converge vers sa direction \cite{pap}.
\end{proof}

\begin{proposition}
Soient $(g_{n})$ et $(h_{n})$ deux suites de l'espace de Teichm\"uller convergeant vers le m\^eme point de
la compactification $\mathcal{T}(\Sigma)\cup\mathcal{PL}(\Sigma)$ de l'espace de Teichm\"uller et
telles que, pour toute courbe simple ferm\'ee $\alpha$,
il existe un nombre $M(\alpha)>0$ tel que, pour tout $n\in\mathbb{N}$, 
$$
\frac{1}{M(\alpha)}\leq\frac{\ell_{h_{n}}(\alpha)}{\ell_{g_{n}}(\alpha)}\leq M(\alpha).
$$
Alors les suites $(d_{\mathcal{T}}(g_{n},h_{n}))$ et $(d_{\mathcal{T}}(h_{n},g_{n}))$ sont born\'ees.
\end{proposition}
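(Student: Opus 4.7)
Mon plan est de contr\^oler les deux quantit\'es $\sup_{\alpha}\ell_{h_{n}}(\alpha)/\ell_{g_{n}}(\alpha)$ et $\sup_{\alpha}\ell_{g_{n}}(\alpha)/\ell_{h_{n}}(\alpha)$ uniform\'ement en $n$, ce qui, par la formule d\'efinissant $d_{\mathcal{T}}$, donne les deux distances voulues born\'ees. Je distingue selon la localisation de la limite commune $\xi$ : si $\xi\in\mathcal{T}(\Sigma)$, les deux suites restent dans un voisinage compact de $\xi$, et la continuit\'e conjointe de $(g,\alpha)\mapsto\ell_{g}(\alpha)$ sur $\mathcal{T}(\Sigma)\times\mathcal{PL}(\Sigma)$ ainsi que la compacit\'e de $\mathcal{PL}(\Sigma)$ donnent des bornes uniformes sur les rapports, sans m\^eme avoir \`a invoquer l'hypoth\`ese.

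Pour le cas int\'eressant $\xi=[\lambda_{0}]\in\mathcal{PL}(\Sigma)$, j'utiliserais la construction thurstonienne du bord pour choisir des renormalisations positives $s_{n},t_{n}$ telles que $\tilde{\ell}_{g_{n}}=s_{n}\ell_{g_{n}}$ et $\tilde{\ell}_{h_{n}}=t_{n}\ell_{h_{n}}$ convergent uniform\'ement sur le compact $\mathcal{PL}(\Sigma)$ vers $\ell_{\lambda_{0}}=i(\lambda_{0},\cdot)$. En \'evaluant sur une courbe simple ferm\'ee $\alpha_{0}$ avec $i(\lambda_{0},\alpha_{0})>0$, l'hypoth\`ese donne $\ell_{h_{n}}(\alpha_{0})/\ell_{g_{n}}(\alpha_{0})\in[M(\alpha_{0})^{-1},M(\alpha_{0})]$ alors que cette m\^eme quantit\'e est \'equivalente \`a $s_{n}/t_{n}$, ce qui encadre $s_{n}/t_{n}$ entre deux constantes strictement positives. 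En \'ecrivant $\ell_{h_{n}}(\alpha)/\ell_{g_{n}}(\alpha)=(s_{n}/t_{n})\,\tilde{\ell}_{h_{n}}(\alpha)/\tilde{\ell}_{g_{n}}(\alpha)$, la convergence uniforme fournit imm\'ediatement des bornes uniformes sur la r\'egion compacte $\{\alpha:i(\lambda_{0},\alpha)\geq\varepsilon\}$ pour tout $\varepsilon>0$, puisque le d\'enominateur y est minor\'e par environ $\varepsilon$.

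L'obstacle principal est le voisinage qui r\'etr\'ecit autour de $A_{0}=\{\alpha:i(\lambda_{0},\alpha)=0\}$, o\`u les deux longueurs renormalis\'ees tendent vers $0$ \`a des vitesses a priori ind\'ependantes. Raisonnant par l'absurde, si le rapport explose, on peut extraire une suite convergente de maximiseurs $\alpha_{n}^{*}\to\alpha^{*}\in A_{0}$. Pour obtenir la contradiction, je pr\'evois d'utiliser une voie ferr\'ee $\tau$ portant $\lambda_{0}$ ainsi qu'un voisinage de $\alpha^{*}$ : exprimer les longueurs comme combinaisons positives de longueurs de courbes simples ferm\'ees $\beta_{i}$ associ\'ees aux branches de $\tau$ permettrait de remonter la borne ponctuelle $\ell_{h_{n}}(\beta_{i})/\ell_{g_{n}}(\beta_{i})\leq M(\beta_{i})$ en une borne uniforme $\max_{i}M(\beta_{i})$ sur le c\^one des mesures port\'ees par $\tau$. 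Le point d\'elicat -- qui est le c\oe ur de l'argument -- est de transf\'erer cette borne aux approximations $\alpha_{n}^{*}$ \'eventuellement hors du c\^one exact, ce qui exploite de fa\c con essentielle l'hypoth\`ese que $g_{n}$ et $h_{n}$ convergent vers la m\^eme classe projective $[\lambda_{0}]$ et non vers deux points diff\'erents du bord.
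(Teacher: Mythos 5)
Votre r\'eduction (normalisation des fonctionnelles de longueur, encadrement de $s_{n}/t_{n}$ gr\^ace \`a l'hypoth\`ese appliqu\'ee \`a une courbe $\alpha_{0}$ avec $i(\lambda_{0},\alpha_{0})>0$, raisonnement par l'absurde avec une suite de maximiseurs $\alpha_{n}^{*}$) suit fid\`element le squelette de la d\'emonstration du papier, mais elle bute exactement l\`a o\`u vous le signalez vous-m\^eme : le traitement des $\alpha_{n}^{*}$ convergeant vers $A_{0}=\{\alpha\ :\ i(\lambda_{0},\alpha)=0\}$ reste un plan et non une preuve. L'id\'ee de voie ferr\'ee que vous esquissez ne passe pas telle quelle : la longueur hyperbolique d'une courbe port\'ee par $\tau$ n'est pas une combinaison lin\'eaire positive de longueurs de courbes simples ferm\'ees \emph{associ\'ees aux branches} (les branches sont des arcs, auxquels l'hypoth\`ese ne s'applique pas), les constantes de comparaison d\'ependraient de m\'etriques qui d\'eg\'en\`erent avec $n$, et rien ne garantit que les $\alpha_{n}^{*}$ soient port\'ees par un voisinage fix\'e de $\tau$. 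C'est donc une lacune r\'eelle, situ\'ee au c\oe ur de l'argument.

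L'ingr\'edient manquant est celui qu'utilise le papier : identifier chaque m\'etrique \`a son courant de Liouville et travailler dans l'espace des courants g\'eod\'esiques de Bonahon \cite{bon}. La convergence des deux suites vers le m\^eme point de la compactification fournit des normalisations $x_{n},y_{n}>0$ telles que $x_{n}L_{g_{n}}$ et $y_{n}L_{h_{n}}$ convergent vers le m\^eme courant limite, et la continuit\'e \emph{conjointe} de la fonctionnelle d'intersection sur les courants donne alors
$$
\lim_{n\to\infty}\frac{y_{n}\,\ell_{h_{n}}(\alpha_{n}^{*})}{x_{n}\,\ell_{g_{n}}(\alpha_{n}^{*})}=1
$$
le long de toute suite $([\alpha_{n}^{*}])$ convergente dans $\mathcal{PL}(\Sigma)$ -- y compris lorsque la limite appartient \`a $A_{0}$ -- tandis que l'hypoth\`ese borne $y_{n}/x_{n}$ dans $\mathbb{R}_{+}^{*}$ ; cela contredit $\ell_{h_{n}}(\alpha_{n}^{*})/\ell_{g_{n}}(\alpha_{n}^{*})\geq n$. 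Ce seul outil accomplit pr\'ecis\'ement le travail que votre discussion par voies ferr\'ees laisse en suspens ; sans lui (ou un substitut complet), votre texte n'\'etablit pas la proposition.
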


\begin{proof}[D\'emonstration]
On raisonne par l'absurde.
Supposons que l'une des suites, disons $(d_{\mathcal{T}}(g_{n},h_{n}))$, ne soit pas born\'ee.
Quitte \`a extraire une sous-suite, on peut supposer que  $(d_{\mathcal{T}}(g_{n},h_{n}))$ converge vers l'infini.
Cela implique que pour tout $n$, il existe une g\'eod\'esique simple ferm\'ee $\alpha_{n}$ telle que
$$
\frac{\ell_{h_{n}}(\alpha_{n})}{\ell_{g_{n}}(\alpha_{n})}\geq n.
$$
Quitte \`a extraire une sous-suite, on peut supposer que la suite des classes projectives $([\alpha_{n}])$ converge
dans $\mathcal{PL}(\Sigma)$ vers la classe projective $[\alpha_{\infty}]$.
De plus, comme les suites $(g_{n})$ et $(h_{n})$ convergent, par hypoth\`ese, vers la m\^eme limite dans $\mathcal{PL}(\Sigma)$,
il existe deux suites $(x_{n})$ et $(y_{n})$ de r\'eels positifs telles que
$$
\forall \beta\in\mathcal{ML}(\Sigma),\ \lim_{n\to\infty}\frac{y_{n}\ell_{h_{n}}(\beta)}{x_{n}\ell_{g_{n}}(\beta)}=1.
$$ 
Par hypoth\`ese, on a, pour $\alpha$ donn\'e,
$$
\frac{y_{n}}{x_{n}}\frac{1}{M(\alpha)}\leq\frac{y_{n}\ell_{h_{n}}(\alpha)}{x_{n}\ell_{g_{n}}(\alpha)}\leq \frac{y_{n}}{x_{n}}M(\alpha).
$$ 
On en d\'eduit que la suite $(\frac{y_{n}}{x_{n}})$ est born\'ee dans $\mathbb{R}_{+}^{*}$.
Quitte \`a extraire une sous suite, on peut supposer que cette suite converge vers un nombre $c\in\mathbb{R}_{+}^{*}$.

Par continuit\'e de la fonctionnelle d'intersection sur l'espace des courants g\'eod\'esiques \cite{bon}, on a
$$
\lim_{n\to\infty}\frac{y_{n}\ell_{h_{n}}(\alpha_{n})}{x_{n}\ell_{g_{n}}(\alpha_{n})}=1,\ \textrm{soit}\
\lim_{n\to\infty}\frac{\ell_{h_{n}}(\alpha_{n})}{\ell_{g_{n}}(\alpha_{n})}=\frac{1}{c}.
$$
Nous obtenons une contradiction, ce qui montre que les suites 
$(d_{\mathcal{T}}(g_{n},h_{n}))$ et $(d_{\mathcal{T}}(h_{n},g_{n}))$ sont born\'ees.
\end{proof}

\end{document}